\numberwithin{equation}{section}
\newtheorem{theorem}{Theorem}
\newtheorem {corollary}{Corollary}
\newtheorem {lemma}{Lemma}
\newtheorem {remark}{Remark}
\newcommand{\ds}{\displaystyle}
\newcommand{\ov}{\overline}
\begin{document}
\setlength{\parindent}{4ex} \setlength{\parskip}{1ex}
\setlength{\oddsidemargin}{12mm} \setlength{\evensidemargin}{9mm}
\setlength{\columnsep}{-3 cm} 
%%---------------------------------------------------------------------------------------------------------
%                                          Title
%%---------------------------------------------------------------------------------------------------------
\title{Sharp existence criteria for positive solutions of Hardy--Sobolev type systems}
\author{John Villavert\footnote{email: villavert@math.ou.edu} \\
[0.2cm] {\small Department of Mathematics, University of Oklahoma}\\
{\small Norman, Oklahoma 73019, USA}
 }
\date{}
\maketitle
%% ------------------------------------------------------------------------------------------------------------------------------
%                                      Abstract
%%-------------------------------------------------------------------------------------------------------------------------------
\begin{abstract}
This paper examines systems of poly-harmonic equations of the Hardy--Sobolev type and the closely related weighted systems of integral equations involving Riesz potentials. Namely, it is shown that the two systems are equivalent under some appropriate conditions. Then a sharp criterion for the existence and non-existence of positive solutions is determined for both differential and integral versions of a Hardy--Sobolev type system with variable coefficients. In the constant coefficient case, Liouville type theorems for positive radial solutions are also established using radial decay estimates and Pohozaev type identities in integral form.
\end{abstract}
 {\small \noindent{\bf Keywords:}\, Lane--Emden equations, Hardy--Sobolev inequality; Hardy--Littlewood--Sobolev inequality, fractional integrals; poly-harmonic equations.\\
\noindent{\bf Mathematics Subject Classification: \,} Primary: 35B53, 45G05, 45G15; Secondary: 35J48, 35J91.
\maketitle
%%-------------------------------------------------------------------------------------------------------------------------------
%                                     Introduction
%%-------------------------------------------------------------------------------------------------------------------------------
\section{Introduction and the main results}

In this paper, we examine weighted systems of integral equations involving Riesz potentials of the form
\begin{equation}\label{ie}
    u_{i}(x) = \ds\int_{\mathbb{R}^{n}} \frac{1}{|x-y|^{n-\alpha}|y|^{\sigma_i}}f_{i}(y,u_{1}(y),u_{2}(y),\ldots,u_{L}(y))\,dy,~i=1,2,\ldots, L,
\end{equation}
where $n\geq 3$, $x \in \mathbb{R}^n$, $\alpha \in (0,n)$, $\sigma_i \in [0,\alpha)$ and $f_{i}$ is smooth with respect to its variables. System \eqref{ie} is closely related to the system of pseudo-differential equations involving fractional Laplacians
\begin{equation}\label{pde}
(-\Delta)^{\alpha/2} u_{i} = |x|^{-\sigma_i}f_{i}(x,u_1,u_2,\ldots, u_L) \text{ in } \mathbb{R}^n\backslash\{0\},
\end{equation}
and we shall determine the suitable conditions in which the two systems are equivalent. One reason for examining this general family of systems is to establish and extend results for the system of integral equations of the Hardy--Sobolev type with variable coefficients,
\begin{equation}\label{whls ie}
  \left\{\begin{array}{cl}
    u(x) = c_{1}(x)\ds\int_{\mathbb{R}^{n}} \frac{v(y)^q}{|x-y|^{n-\alpha}|y|^{\sigma_1}}\,dy, \\
    v(x) = c_{2}(x)\ds\int_{\mathbb{R}^{n}} \frac{u(y)^p}{|x-y|^{n-\alpha}|y|^{\sigma_2}}\,dy, \\
  \end{array}
\right.
\end{equation}
including its corresponding system of differential equations
\begin{equation}\label{whls pde}
  \left\{\begin{array}{cl}
    (-\Delta)^{\alpha/2} u(x) = \displaystyle c_{1}(x)\frac{v(x)^q}{|x|^{\sigma_1}} & 	\text{ in } \mathbb{R}^n\backslash\{0\}, \\
    (-\Delta)^{\alpha/2} v(x) = \displaystyle c_{2}(x)\frac{u(x)^p}{|x|^{\sigma_2}} & 	\text{ in } \mathbb{R}^n\backslash\{0\}.
  \end{array}
\right.
\end{equation}
Here the variable coefficients $c_{1}(x)$ and $c_{2}(x)$ are taken to be double bounded functions where a function $c(x)$ is said to be \textit{double bounded} if there exists a $C>0$ such that $1/C \leq c(x) \leq C$ for all $x\in \mathbb{R}^n$. In particular, we establish an optimal or sharp criterion for the existence and non-existence of positive solutions for the integral system \eqref{whls ie}, thereby obtaining an analogous sharp existence result for the differential system \eqref{whls pde}. Our pursuits are inspired by the classical result which states that the scalar equation
\begin{equation}\label{scalar lane-emden pde}
-\Delta u(x) = u(x)^{p},\,~x\,\in \mathbb{R}^n
\end{equation}
admits positive classical solutions whenever $p \geq \frac{n+2}{n-2}$ but has no positive solutions if $p \in (1, \frac{n+2}{n-2})$ (see \cite{CGS89,GS81apriori,GS81}). Basically, this asserts that the exponent $p = \frac{n+2}{n-2}$ is the dividing number that provides a sharp criterion for distinguishing between the existence and non-existence of solutions. Also, the qualitative analysis of such elliptic problems has many applications. The classification of solutions for \eqref{scalar lane-emden pde} has provided an important ingredient in the study of the Yamabe problem and the prescribing scalar curvature problem. In the critical case $p = \frac{n+2}{n-2}$, Gidas, Ni and Nirenberg \cite{GNN81} proved the radial symmetry and monotonicity of positive solutions for \eqref{scalar lane-emden pde} under the additional decay assumption $u = \mathrm{O}(|x|^{2-n})$. In \cite{CGS89}, Caffarelli, Gidas and Spruck removed this decay assumption and proved the same result. Chen and Li \cite{CL91} and Li \cite{Li96} later provided simplified proofs of these results using the method of moving planes and the Kelvin transform. We also mention that the significance of the Hardy--Sobolev systems lies in the fact that they serve as the ``blow-up" equations for many related nonlinear systems of partial differential equations (PDEs). Namely, results on the non-existence of positive entire solutions, often called Liouville type theorems, are essential in deriving a priori estimates and asymptotic and regularity properties of solutions for a large class of nonlinear elliptic equations.

Let us discuss some other notable examples within the family of Hardy--Sobolev type systems. If $\alpha = 2$, $\sigma_{i} \in \mathbb{R}$ and $c_{1},c_{2} \equiv 1$, then system \eqref{whls pde} reduces to the H\'{e}non--Lane--Emden equations, which provides a model for rotating stellar clusters in astrophysics \cite{Henon73} (see also \cite{CMS98,Phan12,PhanSouplet12} and the references therein). In the case where $\alpha \in (0,n)$, $\sigma_{1},\sigma_{2} = 0$ and $c_{1},c_{2} \equiv 1$, system \eqref{whls ie} becomes the well-known Hardy--Littlewood--Sobolev (HLS) system
\begin{equation}\label{hls ie}
  \left\{\begin{array}{cl}
    u(x) = \ds\int_{\mathbb{R}^{n}} \frac{v(y)^q}{|x-y|^{n-\alpha}}\,dy, \\
    v(x) = \ds\int_{\mathbb{R}^{n}} \frac{u(y)^p}{|x-y|^{n-\alpha}}\,dy, \\
  \end{array}
\right.
\end{equation}
which arises as the Euler--Lagrange equations for a functional related to the fundamental Hardy--Littlewood--Sobolev inequality (see \cite{HL,Lieb83,SW58}). Its corresponding system of psuedo-differential equations is the Lane--Emden type HLS system
\begin{equation}\label{hls pde}
  \left\{\begin{array}{cl}
    (-\Delta)^{\alpha/2} u(x) = v(x)^q & \text{ in } \mathbb{R}^n, \\
    (-\Delta)^{\alpha/2} v(x) = u(x)^p &	\text{ in } \mathbb{R}^n.
  \end{array}
\right.
\end{equation}
If $\alpha \in (0,n)$ the critical Sobolev hyperbola, $h(p,q):=\frac{1}{1+q} + \frac{1}{1 + p} = \frac{n-\alpha}{n}$, is known to play a key role in the existence criteria for the unweighted HLS systems. More precisely, the HLS conjecture \cite{Caristi2008} states that \eqref{hls ie} admits no positive classical solution under the subcritical case $h(p,q) > \frac{n-\alpha}{n}$. Even in the case where $\alpha = 2$, the conjecture, better known as the Lane--Emden conjecture, remains a long-standing open problem; however, partial results are known. For instance, the Lane--Emden conjecture has been verified for radial solutions (see \cite{Mitidieri96}; see also \cite{Caristi2008,LGZ06} for when $\alpha > 2$), for dimensions $n\leq 4$ (see \cite{PQS07,SZ96,Souplet09}), and for $n\geq 5$ but for certain regions below the Sobolev hyperbola (see \cite{BM02,DeFF94,Mitidieri96,Souplet09}). On the other hand, the existence of positive solutions in the critical case $h(p,q) = \frac{n-\alpha}{n}$ follows from a variational argument used in finding the best constant in the HLS inequality \cite{Lieb83}. In the non-subcritical case where $\alpha$ is an even integer, the authors in \cite{Li13} and \cite{LGZ06a} obtained the existence of positive solutions for the poly-harmonic HLS system \eqref{hls pde}. In \cite{Villavert:14b}, the author also obtained existence results for the weighted system \eqref{whls pde}.

In the scalar case where $u\equiv v$, $p=q$, $\sigma_1 = \sigma_2 = \sigma$ and $c_{1},c_{2}\equiv 1$, system \eqref{whls ie} reduces to the weighted integral equation
\begin{equation}\label{scalar whls ie}
u(x) = \ds\int_{\mathbb{R}^{n}} \frac{u(y)^p}{|x-y|^{n-\alpha}|y|^{\sigma}}\,dy.
\end{equation}
When $\alpha = 2$ and $p = \frac{n+\alpha - 2\sigma}{n-\alpha}$, this equation is closely related to the Euler--Lagrange equation for the extremal functions of the classical Hardy--Sobolev inequality (see \cite{CKN84,CW01}). In the unweighted case i.e. $\sigma = 0$, the Liouville type properties and the classification of positive solutions for \eqref{scalar whls ie} and its corresponding differential equation,
\begin{equation}\label{scalar hls pde}
(-\Delta)^{\alpha/2}u(x) = u(x)^{p},\,~x\, \in \mathbb{R}^n,
\end{equation}
was established in \cite{CLO05}--\cite{CLO06}, thus extending the aforementioned classical results for equation \eqref{scalar lane-emden pde}. For instance, it was shown that every positive regular solution of the equation with the critical exponent $p = \frac{n+\alpha}{n-\alpha}$ assumes the form
\begin{equation*}
u(x) = c_{n}\left( \frac{\lambda}{\lambda^2 + |x - x_{0}|^2} \right)^{\frac{n-\alpha}{2}}
\end{equation*}
for some positive constants $c_n$ and $\lambda$. We remark that the methods developed in the framework of integral equations have encouraged our studies here since they provide a generalization of the differential systems and generate novel ideas and alternative methods, perhaps leading to new and interesting directions for other related problems. %A recent example of this is the method of moving planes in integral form, introduced in \cite{CLO06}, to derive the symmetry and monotonicity of solutions for a family of integral equations.

We are now ready to state our main results, but prior to doing so, let us first specify the assumptions we place on our general systems. Hereafter, we impose the following conditions on both systems \eqref{ie} and \eqref{pde}: Let $u = (u_1,u_2,\ldots,u_L)$ be a positive solution of either system and suppose $f_{i}(x,u) > 0$ and $f_{i}(x,0) = 0$ for $x \in \mathbb{R}^n$ and $i=1,2,\ldots ,L$. Then we assume that there exist $p_i > 1$, $\sigma \in [0,\alpha)$ and a positive constant $C$ such that
\begin{equation}\label{general condition}
\ds\sum_{i=1}^{L} |x|^{-\sigma_i}f_{i}(x,u_1,u_2,\ldots,u_L)
\geq C|x|^{-\sigma}\sum_{i=1}^{L} u_{i}(x)^{p_i} \,\text{ for }\, x\neq 0.
\end{equation}

The main results of this paper are organized in the following theorems and corollary, the first of which concerns the equivalence between the system of integral equations and the system of differential equations.
\begin{theorem}\label{theorem1}
Let $\alpha = 2k$ be an even integer. Then the system of integral equations \eqref{ie} and the system of differential equations \eqref{pde} are equivalent. That is, every positive solution of \eqref{ie}, multiplied by a suitable constant if necessary, is a positive solution of \eqref{pde}, and vice versa.
\end{theorem}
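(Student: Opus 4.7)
The equivalence naturally splits into two directions. The forward direction (integral $\Rightarrow$ differential) is essentially a direct computation, while the reverse direction (differential $\Rightarrow$ integral) carries the weight of the theorem and dictates the hypothesis $\alpha = 2k$.

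\textbf{Integral to differential.} Let $u_i$ solve \eqref{ie}. I would apply $(-\Delta)^{\alpha/2}$ directly to both sides and invoke the fact that, up to a dimensional constant $c_{n,\alpha}$, the Riesz kernel $|x-y|^{-(n-\alpha)}$ is the fundamental solution of $(-\Delta)^{\alpha/2}$ on $\mathbb{R}^n$. This yields $(-\Delta)^{\alpha/2}u_i = c_{n,\alpha}|x|^{-\sigma_i}f_i(x,u)$, so after a rescaling $\tilde u_i = \lambda u_i$ with $\lambda$ chosen to absorb $c_{n,\alpha}$ (using the positive-homogeneity implicit in the examples of interest, or simply carrying the constant through the identity), one recovers \eqref{pde}. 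This is where the phrase ``multiplied by a suitable constant if necessary'' in the statement comes in.

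\textbf{Differential to integral.} Let $u=(u_1,\dots,u_L)$ be a positive solution of \eqref{pde} with $\alpha=2k$, and write $g_i(x)=|x|^{-\sigma_i}f_i(x,u(x))\ge 0$. The plan has three steps.

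\emph{Step 1 (Super-polyharmonic property).} Show that $(-\Delta)^{j}u_i\ge 0$ on $\mathbb{R}^n\setminus\{0\}$ for every $j=1,\dots,k-1$ and every $i$. Pass to spherical averages $\bar u_i(r)$, which satisfy the radial ODE $(-\Delta_r)^{k}\bar u_i = \bar g_i(r)\ge 0$, and suppose for contradiction that some iterated radial Laplacian becomes negative at some radius. Integrating the ODE outward and applying hypothesis \eqref{general condition} to control $\bar g_i$ from below by $C r^{-\sigma}\sum \bar u_i^{p_i}$ forces $\bar u_i$ itself to become negative at large $r$, contradicting $u_i>0$. This step mirrors the now-standard super-polyharmonic arguments in the Chen--Li--Ou/Caffarelli--Gidas--Spruck line of work.

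\emph{Step 2 (Descending Green representation).} Use the Liouville-type lemma: if $w\ge 0$ on $\mathbb{R}^n$ is super-harmonic with $-\Delta w = h\ge 0$ of suitable decay, then $w(x)=c_{n,2}\int_{\mathbb{R}^n}|x-y|^{-(n-2)}h(y)\,dy + C$ with a non-negative constant $C$, and $C=0$ whenever the Newton potential of $h$ itself is finite (since any strictly positive additive constant would be incompatible with the positivity of the next lower level in the tower). Applied at the top level to $W_{k-1}:=(-\Delta)^{k-1}u_i$, which is super-harmonic by Step 1 with $-\Delta W_{k-1}=g_i$, this expresses $W_{k-1}$ as a Newton potential. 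Iterating the same Liouville lemma down from $W_{k-1}$ through $W_{k-2},\dots,W_1=-\Delta u_i$ and finally to $u_i$, and collapsing the tower of $k$ Newton convolutions into a single Riesz convolution (using the semigroup identity $(|\cdot|^{-(n-2)})^{*k}=c\,|\cdot|^{-(n-2k)}$), produces
\[
u_i(x) = c_{n,\alpha}\int_{\mathbb{R}^n}\frac{g_i(y)}{|x-y|^{n-\alpha}}\,dy.
\]

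\emph{Step 3.} Normalize $u_i\mapsto c_{n,\alpha}^{-1}u_i$ to obtain a solution of \eqref{ie}; again this accounts for the constant multiple in the statement.

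\textbf{Main obstacle.} The crux is Step 1, the super-polyharmonic property. The difficulty is twofold: the PDE only holds on $\mathbb{R}^n\setminus\{0\}$, so the spherical-average argument must handle the singularity at the origin (typically by checking that the integrated contribution from a small ball around $0$ is controlled); and one must leverage the lower bound \eqref{general condition} together with $p_i>1$ to produce the contradiction at infinity rather than merely super-harmonicity at individual levels. Once Step 1 is secured, the descending Liouville argument in Step 2 is a standard inductive application, and the convergence of the Riesz potential in Step 3 is a routine consequence of the decay estimates provided by Step 1.
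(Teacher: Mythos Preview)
Your overall architecture matches the paper's: the forward direction is elementary, and the reverse hinges on (i) the super-polyharmonic property and (ii) a representation formula. For (ii) you propose an iterative Liouville argument, peeling off one Laplacian at a time and collapsing via the Riesz semigroup identity, whereas the paper multiplies the PDE directly by the Green's function $\varphi_r$ of $(-\Delta)^k$ on $B_r(x_0)$, integrates by parts, and shows the boundary terms vanish along a sequence $r_\ell\to\infty$ (using its Lemmas~\ref{integrable} and~\ref{vanishing boundary integral}). Both routes are legitimate, and both ultimately rest on the same integrability estimate. However, your stated reason for $C=0$ (``incompatible with the positivity of the next lower level'') is not correct: a positive additive constant violates not positivity but the \emph{finiteness} of the Newton potential at the next level, and establishing that finiteness a~priori is precisely the content of the paper's Lemma~\ref{integrable}, which your sketch omits.

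The more serious gap is in Step~1. Your summary (``integrate outward \ldots\ forces $\bar u_i$ to become negative'') covers only the easy case. After the re-centering procedure produces the alternating signs $\tilde u_k<0,\ \tilde u_{k-1}>0,\ldots$, one obtains an immediate contradiction with $u>0$ only when $k$ is odd; when $k$ is even the cascade terminates with $\tilde u>0$ and there is no contradiction yet. The paper then uses the scaling invariance of the inequality $(-\Delta)^k u \ge C|x|^{-\sigma}u^p$ to take $\tilde u(0)$ arbitrarily large and runs a further iteration to force $\tilde u(1)=\infty$. Separately, hypothesis~\eqref{general condition} bounds only the \emph{sum} $\sum_i |x|^{-\sigma_i}f_i$ from below, so you cannot run the argument component-by-component as your sketch suggests; the paper instead applies the scalar argument to the auxiliary combination $v_\epsilon = u_{i_0}+\epsilon\sum_{i\neq i_0}u_i$ and lets $\epsilon\to 0$. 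Both of these ingredients are essential and are absent from your plan.
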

\begin{remark}
When we refer here to positive solutions of either the integral system \eqref{ie} or the differential system \eqref{pde}, we mean classical solutions $u=(u_1,u_2,\ldots,u_L)$, whose positive components belong in $C^{2k}(\mathbb{R}^{n}\backslash\{0\})\cap C(\mathbb{R}^n)$, satisfying the system pointwise except at the origin.
\end{remark}
The next theorem establishes the necessary and sufficient conditions for the existence of positive solutions to the Hardy--Sobolev type system for some double bounded coefficients.
\begin{theorem}\label{theorem2}
Let $p,q > 0$ and $\sigma_{1},\sigma_{2} \in [0,\alpha)$. Then the system of integral equations \eqref{whls ie} admits a positive solution $(u,v)$ for some double bounded functions $c_{1}(x)$ and $c_{2}(x)$ if and only if $pq > 1$ and
\begin{equation}\label{criteria}
\max\Bigg\{ \frac{\alpha(q+1)-(\sigma_1 +\sigma_2 q)}{pq - 1}, \frac{\alpha(p+1)-(\sigma_2 + \sigma_1 p)}{pq - 1} \Bigg\} < n - \alpha.
\end{equation}
\end{theorem}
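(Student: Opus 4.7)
The proof splits naturally into sufficiency (explicit construction) and necessity (bootstrap of asymptotic lower bounds).

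For sufficiency I set
\[
a := \frac{\alpha(q+1)-(\sigma_1+q\sigma_2)}{pq-1}, \quad b := \frac{\alpha(p+1)-(\sigma_2+p\sigma_1)}{pq-1},
\]
chosen so that $bq+\sigma_1-\alpha=a$ and $ap+\sigma_2-\alpha=b$. Under $pq>1$ and $\sigma_i\in[0,\alpha)$ one has $a=(\alpha-\sigma_1)+q(\alpha-\sigma_2)/(pq-1)$ plus the analogue, so $a,b>0$, while \eqref{criteria} puts $a,b\in(0,n-\alpha)$. Taking the ansatz $u_0(x)=(1+|x|)^{-a}$ and $v_0(x)=(1+|x|)^{-b}$, I compute $I_u(x):=\int_{\mathbb R^n} v_0(y)^q|x-y|^{-(n-\alpha)}|y|^{-\sigma_1}\,dy$ and its analogue $I_v$. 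Splitting $\mathbb R^n$ into the standard four regions $\{|y|\le 1\}$, $\{1\le|y|\le|x|/2\}$, $\{|x|/2\le|y|\le 2|x|\}$, $\{|y|\ge 2|x|\}$ and using that $bq+\sigma_1\in(\alpha,n)$ (equivalent to $a\in(0,n-\alpha)$) for integrability near the origin and at infinity, each region contributes $O((1+|x|)^{-a})$; the dominant behaviour $I_u(x)\asymp (1+|x|)^{-a}\asymp u_0(x)$ then follows. Hence $c_1(x):=u_0(x)/I_u(x)$ is continuous, positive, and bounded above and below, and similarly $c_2(x):=v_0(x)/I_v(x)$. This exhibits a positive solution with double bounded coefficients.

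For necessity, assume $(u,v)$ is a positive solution. Continuity on $\mathbb R^n$ together with the Riesz-type structure of the equations yields that $u,v$ are bounded and decay to zero at infinity. The core argument is an iterated lower-bound scheme. Restricting the integral for $u(x)$ to $\{|y|\le 1\}$ and using $|x-y|\le 2|x|$ for $|x|$ large produces the initial estimate $u(x)\ge C|x|^{-(n-\alpha)}$, and symmetrically for $v$. If $u(x)\ge c|x|^{-a_k}$ and $v(x)\ge c|x|^{-b_k}$ at infinity, then restricting the $u$-integral to the ball $\{|y-x|\le|x|/2\}$ (where $|y|\asymp|x|$) and the $v$-integral analogously gives improved bounds with
\[
a_{k+1}=qb_k+\sigma_1-\alpha,\qquad b_{k+1}=pa_k+\sigma_2-\alpha.
\]
The two-step iteration reads $a_{k+2}=pq\cdot a_k+c_0$ with $c_0=q(\sigma_2-\alpha)+(\sigma_1-\alpha)<0$ and unique fixed point $a^\ast=[\alpha(q+1)-(\sigma_1+q\sigma_2)]/(pq-1)$.

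If $pq\le 1$, this recursion forces $a_k\to-\infty$ (linearly when $pq=1$, or to a negative limit when $pq<1$), so eventually $u(x)\ge C|x|^{\,|a_k|}$ blows up at infinity, contradicting the established decay; hence $pq>1$ is necessary. With $pq>1$, the contraction/expansion identity $a_{k+2}-a^\ast=pq(a_k-a^\ast)$ amplifies. If $a^\ast>n-\alpha$ then $a_0-a^\ast<0$ and $a_k\to-\infty$, giving the same contradiction; symmetrically $b^\ast>n-\alpha$ is excluded. The borderline $a^\ast=n-\alpha$ requires a refined matching: the forced asymptotic $u(x)\asymp|x|^{-(n-\alpha)}$ and the corresponding computation of $I_u(x)$ reveals a logarithmic correction from the annular region $\{1\le|y|\le|x|/2\}$, giving $I_u(x)\asymp|x|^{-(n-\alpha)}\log|x|$, whence $c_1(x)=u(x)/I_u(x)\to 0$, violating double-boundedness.

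The routine parts are the region-by-region asymptotic comparisons in the sufficiency proof and the bounded-below/bounded-above bookkeeping that underlies the iteration. The step I expect to be most delicate is the borderline case $\max\{a^\ast,b^\ast\}=n-\alpha$, where the contradiction is obtained not from a growing lower bound but from a sub-logarithmic mismatch that forces the prefactor $c_1$ or $c_2$ to vanish at infinity; care is needed to make this asymptotic expansion rigorous from only the two-sided bounds provided by the iteration rather than from an exact power law for $u,v$.
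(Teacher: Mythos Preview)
Your sufficiency argument and your necessity argument in the strict cases ($pq\le 1$, or $pq>1$ with strict inequality in \eqref{criteria}) are essentially the paper's: the same radial ansatz with the same exponents, the same region-by-region comparison, and the same bootstrap recursion $a_{k+1}=qb_k+\sigma_1-\alpha$, $b_{k+1}=pa_k+\sigma_2-\alpha$ started at $a_0=n-\alpha$. Two small points: the iteration gives only \emph{lower} bounds on $u,v$, so the contradiction in the strict cases should be phrased as ``the integral defining $u(x)$ diverges'' (once some $a_k$ drops far enough the tail $\int_R^\infty r^{\alpha-\sigma_1-qb_k-1}\,dr=\infty$), not as ``$u$ blows up at infinity, contradicting decay''; and your preliminary assertion that the Riesz structure forces $u,v\to 0$ at infinity is unproved and in fact unnecessary.

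The genuine gap is in the borderline case $\max\{a^\ast,b^\ast\}=n-\alpha$. Your argument asserts the two–sided asymptotic $u(x)\asymp|x|^{-(n-\alpha)}$ and then deduces $c_1(x)\asymp 1/\log|x|\to 0$. But the iteration never produces an \emph{upper} bound on $u$; at the fixed point it stalls at the one–sided estimate $u(x)\gtrsim|x|^{-(n-\alpha)}$. Feeding the corresponding lower bound for $v$ back into $I_u$ on the annulus $\{1\le|y|\le|x|/2\}$ does give $I_u(x)\gtrsim|x|^{-(n-\alpha)}\log|x|$, but from $u=c_1 I_u$ with $c_1$ double bounded this yields the \emph{stronger lower bound} $u(x)\gtrsim|x|^{-(n-\alpha)}\log|x|$, not an upper bound; so you cannot conclude $c_1\to 0$. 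One could try to iterate these logarithmic improvements, but that requires careful control of the constants at each step, which you have not supplied. The paper avoids this entirely with an $L^1$ argument: from $u(x)\ge (R+|x|)^{-(n-\alpha)}\int_{B_R}|y|^{-\sigma_1}v^q\,dy$ and its companion one obtains
\[
\int_{B_R}\frac{u^p}{|x|^{\sigma_2}}\,dx \;\ge\; C\Bigl(\int_{B_R}\frac{u^p}{|x|^{\sigma_2}}\,dx\Bigr)^{pq}
\]
with $C$ independent of $R$ precisely because of the borderline equality. Since $pq>1$ this forces $|x|^{-\sigma_2}u^p\in L^1(\mathbb R^n)$; repeating the estimate on the dyadic shell $B_{2R}\setminus B_R$ and letting $R\to\infty$ then gives $\int_{\mathbb R^n}|x|^{-\sigma_2}u^p=0$, contradicting $u>0$. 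I would replace your logarithmic-mismatch sketch with this integral argument.
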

Theorems \ref{theorem1} and \ref{theorem2} imply the following.
\begin{corollary}\label{corollary1}
Let $\alpha = 2k$ be an even integer, $p,q > 1$ and $\sigma_{1},\sigma_{2} \in [0,\alpha)$. Then the system of differential equations \eqref{whls pde} admits a positive solution $(u,v)$ for some double bounded functions $c_{1}(x)$ and $c_{2}(x)$ if and only if
\begin{equation*}
\max\Bigg\{ \frac{2k(q+1)-(\sigma_1 +\sigma_2 q)}{pq - 1}, \frac{2k(p+1)-(\sigma_2 + \sigma_1 p)}{pq - 1} \Bigg\} < n - 2k.
\end{equation*}
\end{corollary}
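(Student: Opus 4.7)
The plan is to derive Corollary \ref{corollary1} as a direct consequence of Theorems \ref{theorem1} and \ref{theorem2}, so the proof is essentially a bookkeeping argument combining the equivalence of the integral and differential formulations with the sharp criterion already established in the integral setting. I would write it as a short two-part argument, handling sufficiency and necessity separately.

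For the \emph{sufficiency} direction, I would begin by assuming the displayed inequality with $\alpha = 2k$. Since $p, q > 1$ gives $pq > 1$ for free, all hypotheses of Theorem \ref{theorem2} are in place, so that theorem supplies double bounded functions $c_{1}(x)$, $c_{2}(x)$ and a positive solution $(u,v)$ of the integral system \eqref{whls ie}. Theorem \ref{theorem1} then guarantees that $(u,v)$, after being multiplied by a suitable positive constant $\kappa$ if necessary, solves the differential system \eqref{whls pde}. The only tiny point to address is that this rescaling does not destroy the double boundedness requirement on the coefficients: absorbing $\kappa$ into $c_{1}, c_{2}$ produces new coefficients $\tilde{c}_{i}(x) = \kappa\, c_{i}(x)$ which remain bounded above and below by positive constants, so they are still double bounded. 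This gives a positive solution of \eqref{whls pde} with admissible coefficients.

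For the \emph{necessity} direction, I would reverse the argument. Suppose $(u,v)$ is a positive solution of \eqref{whls pde} for some double bounded $c_{1}(x), c_{2}(x)$. Since $\alpha = 2k$ is an even integer, Theorem \ref{theorem1} applies and, after a suitable constant rescaling, $(u,v)$ becomes a positive solution of the integral system \eqref{whls ie} with double bounded coefficients (again the rescaling is harmless, by the same observation as above). Applying Theorem \ref{theorem2} to this solution then forces $pq > 1$, which is consistent with the hypothesis $p, q > 1$, and forces the inequality \eqref{criteria} with $\alpha = 2k$, i.e.\ the displayed bound in the corollary.

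I do not anticipate a real obstacle: the only subtlety worth flagging in writing is the rescaling step, since Theorem \ref{theorem1} is stated only up to multiplication by a constant, and one must verify that this constant can be absorbed into $c_{1}, c_{2}$ without leaving the double bounded class. Everything else is a clean syllogism between the two theorems, so the proof should occupy only a few lines.
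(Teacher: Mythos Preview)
Your overall strategy---combine Theorems \ref{theorem1} and \ref{theorem2}---matches the paper's, but you have misidentified the bookkeeping subtlety. The real issue is not the scalar constant $\kappa$ coming from Theorem \ref{theorem1}; it is the \emph{location} of the double bounded coefficient. In system \eqref{whls ie} the coefficient $c_i(x)$ sits \emph{outside} the integral and depends on $x$, whereas Theorem \ref{theorem1} is stated for systems of the form \eqref{ie}--\eqref{pde}, where the nonlinearity $f_i(y,\cdot)$ sits \emph{inside} the integral and depends on $y$. So Theorem \ref{theorem1} does not literally apply to \eqref{whls ie}--\eqref{whls pde}, and neither direction is the one-line syllogism you describe.

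Concretely: in the necessity direction, starting from a solution of \eqref{whls pde} and applying Theorem \ref{theorem1} with $f_1(y,u,v) = c_1(y)v^q$ yields (up to a constant)
\[
u(x) = a_1 \int_{\mathbb{R}^n} \frac{c_1(y)\,v(y)^q}{|x-y|^{n-\alpha}|y|^{\sigma_1}}\,dy,
\]
with $c_1(y)$ inside---this is not yet of the form \eqref{whls ie}. The paper uses the double boundedness of $c_1$ to sandwich $u(x)$ between constant multiples of the unweighted integral, and then \emph{defines} a new outside coefficient $a_1(x) := u(x)\big/\int \frac{v(y)^q}{|x-y|^{n-\alpha}|y|^{\sigma_1}}\,dy$, which is double bounded by construction. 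In the sufficiency direction the paper goes the other way: given $(u,v)$ solving \eqref{whls ie} with outside coefficients $a_i(x)$, it sets $w_1 := u/a_1$, $w_2 := v/a_2$, so that $(w_1,w_2)$ satisfies an integral system literally of the form \eqref{ie}, and only then invokes Theorem \ref{theorem1} to produce a solution of \eqref{whls pde} with new double bounded coefficients built from $a_1, a_2$.

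None of this is difficult, and your proof is easily repaired along these lines, but as written it skips the actual content of the argument; the constant-$\kappa$ discussion is a red herring.
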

Of course, these results hold for the scalar integral equation and its corresponding differential equation as well and we state them here for completeness sake. The following theorem, however, is essentially contained in \cite{Lei13}.
\begin{theorem}
Let $p>0$ and $\sigma \in [0,\alpha)$. Then there hold the following.
\begin{enumerate}[(i)]
\item The integral equation,
\begin{equation*}
u(x) = c(x)\int_{\mathbb{R}^n} \frac{u(y)^p}{|x-y|^{n-\alpha}|y|^{\sigma}} \,dy,
\end{equation*}
admits a positive solution $u$ for some double bounded function $c(x)$ if and only if $p > \frac{n-\sigma}{n-\alpha}$.

\item Let $p>1$ and $\alpha = 2k$ be an even integer. Then the poly-harmonic equation,
\begin{equation*}
(-\Delta)^k u(x) = c(x)\frac{u(x)^p}{|x|^{\sigma}} \,\text{ in }\, \mathbb{R}^n \backslash \{0\},
\end{equation*}
admits a positive solution $u$ for some double bounded function $c(x)$ if and only if $p > \frac{n-\sigma}{n-2k}$.
\end{enumerate}
\end{theorem}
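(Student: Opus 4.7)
The plan is to deduce both parts directly from the two-component results already stated, exploiting the fact that the scalar integral equation and the scalar poly-harmonic equation are, respectively, the symmetric cases of \eqref{whls ie} and \eqref{whls pde}. For part (i), I would apply Theorem~\ref{theorem2} with the symmetric data $p=q$, $\sigma_{1}=\sigma_{2}=\sigma$, $c_{1}=c_{2}=c$, and $u=v$. Both fractions inside the maximum in \eqref{criteria} collapse to
$$\frac{\alpha(p+1)-\sigma(1+p)}{p^{2}-1}=\frac{\alpha-\sigma}{p-1},$$
so the criterion becomes $p>(n-\sigma)/(n-\alpha)$. Because $\sigma\in[0,\alpha)$ and $\alpha<n$, this threshold is strictly larger than $1$, so the side condition $pq=p^{2}>1$ is automatic. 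A positive solution $u$ of the scalar equation immediately yields the symmetric pair $(u,u)$ solving \eqref{whls ie} with $c_{1}=c_{2}=c$, and conversely the construction driving the existence half of Theorem~\ref{theorem2} can be arranged to be symmetric on symmetric data, producing $u\equiv v$ and $c_{1}\equiv c_{2}$.

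For part (ii), I would combine part (i) with Theorem~\ref{theorem1} applied to $L=1$, $f_{1}(x,u)=c(x)u^{p}$, $\sigma_{1}=\sigma$. The structural hypothesis \eqref{general condition} is trivially satisfied with $p_{1}=p$, and the extra assumption $p>1$ in part (ii) is precisely the $p_{i}>1$ requirement needed to invoke Theorem~\ref{theorem1}. The equivalence then transports the sharp criterion of part (i) into the setting of the scalar poly-harmonic equation, yielding the threshold $p>(n-\sigma)/(n-2k)$ once $\alpha=2k$.

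The one genuinely delicate bookkeeping point, and the step I would scrutinize most carefully, is the symmetric-existence reduction in part (i): one must check that the construction used to prove existence in Theorem~\ref{theorem2} is invariant under the involution swapping the two components, so that symmetric data actually produces $u \equiv v$ and $c_{1} \equiv c_{2}$. This is typical for iterative or variational schemes built from radially symmetric ansatzes, but if the construction happens not to be manifestly symmetric, one can instead appeal directly to \cite{Lei13}, where the scalar problem is treated head-on. Apart from this verification, no new analytic ideas are required beyond those already in play for Theorems~\ref{theorem1} and~\ref{theorem2}.
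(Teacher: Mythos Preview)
The paper does not supply its own proof of this theorem; it merely remarks that the result ``is essentially contained in \cite{Lei13}.'' Your proposed route---specializing Theorems~\ref{theorem1} and~\ref{theorem2} (and, implicitly, the argument of Corollary~\ref{corollary1}) to the diagonal $p=q$, $\sigma_1=\sigma_2=\sigma$, $u=v$---is correct and is, in effect, how one would read off the scalar statement from the system results already established in the paper.

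On your ``delicate bookkeeping point'': the existence half of Theorem~\ref{theorem2} is proved by exhibiting the explicit radial pair $u=(1+|x|^2)^{-\theta_1}$, $v=(1+|x|^2)^{-\theta_2}$. When $p=q$ and $\sigma_1=\sigma_2$ the exponents $\theta_1$ and $\theta_2$ coincide, so $u\equiv v$, and the defining formulas for $c_1$ and $c_2$ then agree identically. Hence the construction is manifestly symmetric on symmetric data, and no separate appeal to \cite{Lei13} is required.

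One small point you glossed over in part~(ii): with $f_1(x,u)=c(x)u^p$, Theorem~\ref{theorem1} links the poly-harmonic equation to the integral equation
\[
u(x)=\int_{\mathbb{R}^n}\frac{c(y)\,u(y)^p}{|x-y|^{n-\alpha}|y|^{\sigma}}\,dy,
\]
which carries the coefficient \emph{inside} the integral, whereas part~(i) concerns the form with $c(x)$ \emph{outside}. Passing between the two (at the cost of replacing $c$ by another double-bounded function) is precisely the ``Equivalence'' step written out in the proof of Corollary~\ref{corollary1}; you should either invoke that argument explicitly or, more simply, obtain part~(ii) directly as the symmetric case of Corollary~\ref{corollary1}.
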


The remaining two results are Liouville type theorems concerning radially symmetric, decreasing solutions for Hardy--Sobolev type systems with constant coefficients. Here, the radial solutions are understood to belong to $C^{\lfloor \alpha \rfloor}(\mathbb{R}^n \backslash \{0\})\cap C(\mathbb{R}^n)$ where $\lfloor \,\cdot\, \rfloor$ is the greatest integer function. In view of the equivalence between the differential and integral equations, these theorems include the non-existence results for the unweighted Lane--Emden type differential system (see \cite{LGZ06,Mitidieri96}). Moreover, the results of \cite{Villavert:14b} suggest our non-existence results for the Hardy--Sobolev differential equation and system are indeed optimal as far as radial solutions are concerned.

\begin{theorem}\label{theorem4}
Let $\alpha \in [2,n)$ and $\sigma \in (-\infty,\alpha)$. Then the integral equation
\begin{equation}\label{hs eq}
u(x) = \ds\int_{\mathbb{R}^{n}} \frac{u(y)^p}{|x-y|^{n-\alpha}|y|^{\sigma}}\,dy,
\end{equation}
has no positive radial solution if
\begin{equation}\label{subcritical1}
0 < p < \frac{n + \alpha - 2\sigma}{n - \alpha}.
\end{equation}
\end{theorem}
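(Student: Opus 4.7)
The plan is to argue by contradiction using a Pohozaev-type identity in integral form together with radial decay estimates that kill all boundary contributions. Assume $u$ is a positive radial solution of \eqref{hs eq} in the subcritical range \eqref{subcritical1}. As a preliminary, I note that any positive radial solution of \eqref{hs eq} is automatically monotonically decreasing in $|x|$, since the Riesz kernel $|x-y|^{\alpha-n}$ is radially decreasing. Restricting the integral in \eqref{hs eq} to the annulus $\{|x|/4\le|y|\le|x|/2\}$ and using this monotonicity gives the lower estimate $u(r)\ge C\,r^{\alpha-\sigma}u(r/2)^{p}$, which bounds how fast $u$ can decay. Matching upper estimates come from splitting the integral in \eqref{hs eq} into the three pieces $|y|\le|x|/2$, $|x|/2\le|y|\le 2|x|$, and $|y|\ge 2|x|$, and bootstrapping. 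The goal of this preliminary step is to conclude
\[
\int_{\mathbb{R}^n}\frac{u^{p+1}(y)}{|y|^{\sigma}}\,dy<\infty,\qquad R^{\alpha-\sigma}u(R)^{p}\to 0,\qquad R^{n-\sigma}u(R)^{p+1}\to 0 \quad\text{as } R\to\infty,
\]
which suffices to kill every boundary term appearing below.

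Next I would derive a Pohozaev-type identity in integral form. Applying $x\cdot\nabla_x$ to both sides of \eqref{hs eq} and using the pointwise identity $x\cdot\nabla_x|x-y|^{\alpha-n}=(\alpha-n)|x-y|^{\alpha-n}-y\cdot\nabla_y|x-y|^{\alpha-n}$, I would integrate the $y$-piece by parts (the surface term at $|y|=R$ vanishes by the decay estimates) and expand $y\cdot\nabla_y(u^p/|y|^{\sigma})$ to obtain
\[
x\cdot\nabla u(x)=(\alpha-\sigma)\,u(x)+p\int_{\mathbb{R}^n}\frac{u^{p-1}(y)\bigl(y\cdot\nabla u(y)\bigr)}{|y|^{\sigma}\,|x-y|^{n-\alpha}}\,dy.
\]
Multiplying by $u^p(x)/|x|^{\sigma}$, integrating over $\mathbb{R}^n$, and using the symmetry of the kernel together with \eqref{hs eq} to collapse the resulting double integral produces
\[
(1-p)\int_{\mathbb{R}^n}\frac{u^p(x)\bigl(x\cdot\nabla u(x)\bigr)}{|x|^{\sigma}}\,dx=(\alpha-\sigma)\int_{\mathbb{R}^n}\frac{u^{p+1}(x)}{|x|^{\sigma}}\,dx.
\]
A separate integration by parts, using $\nabla\cdot(x/|x|^{\sigma})=(n-\sigma)/|x|^{\sigma}$ and the vanishing of $R^{n-\sigma}u(R)^{p+1}$, evaluates the left-hand integral as $-(n-\sigma)(p+1)^{-1}\int u^{p+1}/|x|^{\sigma}\,dx$; substituting yields the Pohozaev identity
\[
\frac{p(n-\alpha)-(n+\alpha-2\sigma)}{p+1}\int_{\mathbb{R}^n}\frac{u^{p+1}(y)}{|y|^{\sigma}}\,dy=0.
\]

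The bracketed coefficient vanishes precisely at $p=(n+\alpha-2\sigma)/(n-\alpha)$ and is strictly negative throughout the subcritical range \eqref{subcritical1}, so the identity forces $\int u^{p+1}/|y|^{\sigma}\,dy=0$, contradicting the positivity of $u$. The main obstacle is the preliminary decay step: establishing, uniformly across the full subcritical range $0<p<(n+\alpha-2\sigma)/(n-\alpha)$, enough upper decay of $u$ at infinity to simultaneously justify differentiation under the integral sign, the integration by parts in $y$ at infinity, and the vanishing of $R^{n-\sigma}u(R)^{p+1}$. The bootstrap is most delicate when $p\le 1$, where the formal scaling decay rate $r^{-(\alpha-\sigma)/(p-1)}$ is no longer usable, and one must lean instead on radial monotonicity together with the lower bound $u(r)\ge C\,r^{\alpha-\sigma}u(r/2)^p$ obtained above.
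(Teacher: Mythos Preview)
Your approach matches the paper's: obtain a decay bound for radial solutions, derive a Pohozaev-type identity in integral form, and conclude from the sign of the coefficient in the subcritical range. Your route to the identity---collapsing the double integral via Fubini and the equation itself---differs only cosmetically from the paper's, which integrates by parts once more and uses the symmetrization $z\cdot(x-z)+x\cdot(z-x)=-|x-z|^{2}$; both yield the same identity up to a constant factor.

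The one substantive divergence is the range $0<p\le 1$. The paper does \emph{not} push the Pohozaev argument there; it disposes of $p\le 1$ by invoking the scalar case of its general non-existence iteration (Theorem~\ref{non-existence}), which rules out any positive solution. Your own lower bound already does this directly: from monotonicity one has $u(r)\ge C\,r^{\alpha-\sigma}u(r)^{p}$, hence $u(r)^{1-p}\ge C\,r^{\alpha-\sigma}$, which for $p<1$ forces $u(r)\to\infty$ and for $p=1$ is absurd for large $r$. So rather than trying to ``bootstrap upper decay'' in the sublinear regime---which cannot succeed, since the inequality runs the wrong way---use the lower bound to eliminate $p\le 1$ outright and reserve the Pohozaev computation for $p>1$, where the decay $u(r)\le C\,r^{-(\alpha-\sigma)/(p-1)}$ together with the subcritical condition gives the finiteness of $\int_{\mathbb{R}^n} u^{p+1}|x|^{-\sigma}\,dx$ and the vanishing of all boundary terms exactly as you outlined.
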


\begin{theorem}\label{theorem5}
Let $p,q > 0$, $\alpha \in [2,n)$ and $\sigma_{1},\sigma_{2}\in (-\infty,\alpha)$. Then the system of integral equations
\begin{equation}\label{hs sys}
  \left\{\begin{array}{cl}
    u(x) = \ds\int_{\mathbb{R}^{n}} \frac{v(y)^q}{|x-y|^{n-\alpha}|y|^{\sigma_1}}\,dy, \\
    v(x) = \ds\int_{\mathbb{R}^{n}} \frac{u(y)^p}{|x-y|^{n-\alpha}|y|^{\sigma_2}}\,dy, \\
  \end{array}
\right.
\end{equation}
has no positive radial solution if $pq \in (0,1]$ or if $pq > 1$ and
\begin{equation}\label{subcritical2}
\frac{n-\sigma_1}{1 + q} + \frac{n-\sigma_2}{1 + p} > n-\alpha.
\end{equation}
\end{theorem}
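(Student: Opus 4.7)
The plan is to handle the two regimes $pq\le 1$ and $pq>1$ (with the strict subcritical condition) separately, using the integral representations to extract radial decay estimates together with either a bootstrap iteration or an integral-form Pohozaev identity. A common starting observation is that a positive radial solution $(u,v)$ must satisfy $u(0),v(0)<\infty$, which in radial coordinates translates into the tail integrability
\[
\int_0^{\infty} v(r)^q\, r^{\alpha-1-\sigma_1}\,dr < \infty, \qquad \int_0^{\infty} u(r)^p\, r^{\alpha-1-\sigma_2}\,dr < \infty,
\]
while restricting the Riesz convolution to the region $|y|<1$ immediately yields the a priori lower bound $u(r), v(r) \gtrsim r^{\alpha-n}$ for $r$ large.

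In the regime $pq \le 1$, I would bootstrap these lower bounds: once $u(r) \gtrsim r^{-\beta}$ and $v(r) \gtrsim r^{-\gamma}$ are known, integrating over $|y|<r/2$ in the Riesz representations produces improved bounds with exponents $\beta', \gamma'$ depending linearly on $(p\beta, q\gamma)$. The fixed points of this iteration are precisely the natural scaling exponents $a = \tfrac{\alpha(q+1)-\sigma_1-q\sigma_2}{pq-1}$ and $b = \tfrac{\alpha(p+1)-\sigma_2-p\sigma_1}{pq-1}$, which require $pq>1$ to be positive. When $pq \le 1$, the iteration has no valid attractor and drives the exponents monotonically in the wrong direction, eventually violating the tail integrability above, ruling out positive radial solutions.

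In the regime $pq > 1$ with $\frac{n-\sigma_1}{q+1} + \frac{n-\sigma_2}{p+1} > n - \alpha$, the same bootstrap converges and produces matched asymptotics $u(r) \asymp r^{-a}$ and $v(r) \asymp r^{-b}$. With these rates in hand, I would derive an integral Pohozaev-type identity starting from the pairings $\int (x\cdot \nabla u)\,|x|^{-\sigma_2} u^p\,dx$ and $\int (x\cdot \nabla v)\,|x|^{-\sigma_1} v^q\,dx$ on the annulus $\{\varepsilon<|x|<R\}$, substituting the Riesz representations into one factor, and exploiting the symmetry identity $\int u^{p+1}|x|^{-\sigma_2}\,dx = \int v^{q+1}|x|^{-\sigma_1}\,dx$ (obtained by inserting the first equation into the energy and applying Fubini). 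Letting $\varepsilon \to 0$ and $R \to \infty$, the matched decay estimates make every boundary contribution vanish, leaving
\[
\left(\frac{n-\sigma_1}{q+1} + \frac{n-\sigma_2}{p+1} - (n-\alpha)\right) \int_{\mathbb{R}^n} \frac{v(x)^{q+1}}{|x|^{\sigma_1}}\,dx = 0,
\]
whose coefficient is strictly positive by the subcritical hypothesis, forcing $v \equiv 0$ and hence $u \equiv 0$. The main obstacle will be executing this identity rigorously under the fractional-order regularity $C^{\lfloor \alpha \rfloor}$: classical integration by parts must be replaced by Fubini-based symmetric manipulations of the double integrals, and the hypothesis $\alpha \geq 2$ is needed to guarantee enough smoothness on annuli to carry out the one integration by parts hidden in the $x\cdot\nabla$ pairing.
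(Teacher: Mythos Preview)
Your overall architecture matches the paper: the case $pq\le 1$ is disposed of by the lower-bound bootstrap (this is exactly Theorem~\ref{non-existence}, which the paper invokes in the remark preceding the proof), and the $pq>1$ subcritical case is settled by an integral Pohozaev identity, arriving at precisely the identity you wrote. The symmetry relation $\int u^{p+1}|x|^{-\sigma_2}=\int v^{q+1}|x|^{-\sigma_1}$ and the pairings with $x\cdot\nabla u$, $x\cdot\nabla v$ are also how the paper proceeds.

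There is, however, a genuine gap in your decay-estimate step for $pq>1$. Your lower-bound bootstrap does \emph{not} converge to matched asymptotics: the iteration $\beta\mapsto pq\beta-\bigl(\alpha(1+q)-\sigma_1-q\sigma_2\bigr)$ has $a$ as a \emph{repelling} fixed point when $pq>1$, so starting from $\beta_0=n-\alpha$ the iterates diverge monotonically---to $+\infty$ if $n-\alpha>a$ (giving only trivially weak lower bounds), and to $-\infty$ if $n-\alpha<a$ (the mechanism behind Theorem~\ref{non-existence}, which however does not cover the full subcritical range \eqref{subcritical2}). In particular the bootstrap furnishes no \emph{upper} bounds at all, yet upper bounds are exactly what the Pohozaev argument needs in order to show that the energies $\int u^{p+1}|x|^{-\sigma_2}$ and $\int v^{q+1}|x|^{-\sigma_1}$ are finite and that boundary terms vanish.

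The paper fills this gap by exploiting the radial \emph{monotonicity} of the solution (the hypothesis is radially decreasing), an ingredient you do not invoke. For $|x|=r$, the decreasing property of $v$ gives
\[
u(r)\ \ge\ \int_{B_r(0)}\frac{v(|y|)^q}{|x-y|^{n-\alpha}|y|^{\sigma_1}}\,dy\ \ge\ C\,v(r)^q\,r^{\alpha-\sigma_1},
\]
and symmetrically $v(r)\ge C u(r)^p r^{\alpha-\sigma_2}$. Substituting one into the other and solving algebraically yields the sharp upper bounds $u(r)\le Cr^{-a}$, $v(r)\le Cr^{-b}$ in a single step (Lemma~\ref{decay of uv}), with no iteration. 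Once you have these, your Pohozaev outline goes through exactly as in the paper; two-sided asymptotics are never needed.
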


The rest of this article is organized as follows. In Section \ref{section2}, we begin with several intermediate lemmas and basic results required in our proof of Theorem \ref{theorem1}, which we then provide at the end of the section. Section \ref{section3} starts off with a general non-existence result for the Hardy--Sobolev type integral system followed by the proofs of Theorem \ref{theorem2} and Corollary \ref{corollary1}. The paper concludes with Section \ref{section4}, which provides the proof of both Theorems \ref{theorem4} and \ref{theorem5}.
\begin{remark}
Throughout this paper, we adopt the convention that $C$ represents some constant in the inequalities which may change from line to line. At times, we append subscripts to $C$ to specify its dependence on the subscript parameters.
\end{remark}

\section{Some preliminaries and the proof of Theorem \ref{theorem1}}\label{section2}
A key idea in the proof of Theorem \ref{theorem1} is to multiply the PDEs by a suitable Greene's function, integrate on a ball domain of radius $R$, then send $R\rightarrow \infty$ to show the solutions of the PDE system satisfy the integral system. However, we need to address some technical issues such as the super poly-harmonic property of solutions and some integrability properties of solutions.
\subsection{Preliminaries}
We begin with the super poly-harmonic property for solutions of \eqref{pde}.
\begin{lemma}[Super poly-harmonic property]\label{super polyharmonic}
Let $\alpha = 2k$ be an even integer and suppose that $u=(u_1,u_2,\ldots,u_L)$ is a positive solution of \eqref{pde}. Then
$$(-\Delta)^{j}u_i > 0 \,\text{ for }\, i=1,2,\ldots,L,\, j = 1,2,\ldots,k-1.$$
\end{lemma}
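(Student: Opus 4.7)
My plan is to argue by contradiction via a spherical-average cascade. Define $w_j^i := (-\Delta)^j u_i$ for $0 \le j \le k-1$ and $w_k^i := |x|^{-\sigma_i} f_i(x,u)$, so that \eqref{pde} becomes the cascade $-\Delta w_j^i = w_{j+1}^i$ on $\mathbb{R}^n \setminus \{0\}$, with $w_k^i > 0$ and $w_0^i = u_i > 0$. Suppose for contradiction that $w_{j_0}^{i_0}(x_0) \le 0$ for some $i_0$, some $j_0 \in \{1,\ldots,k-1\}$, and some $x_0 \ne 0$.

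The natural tool is the spherical average $\bar w_j(r) := |\partial B_r(x_0)|^{-1}\int_{\partial B_r(x_0)} w_j^{i_0}\,dS$ centered at $x_0$. The commutation $-\Delta \bar w_j(r) = \overline{w_{j+1}^{i_0}}(r)$ and the divergence form $-\Delta f(r) = -r^{1-n}(r^{n-1}f'(r))'$ of the radial Laplacian turn the cascade into a tower of ODEs. Starting at level $j_0$ with $\bar w_{j_0}'(0)=0$ and the strictly positive source $\overline{w_{j_0+1}^{i_0}}>0$, one integration gives $r^{n-1}\bar w_{j_0}'(r) = -\int_0^r s^{n-1}\overline{w_{j_0+1}^{i_0}}(s)\,ds < 0$, so $\bar w_{j_0}$ is strictly decreasing; combined with $\bar w_{j_0}(0) = w_{j_0}^{i_0}(x_0)\le 0$, this forces $\bar w_{j_0}(r)\le -\delta$ for all $r\ge r_0$.

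I would then propagate this negativity down the tower by repeated ODE integration. Each step flips the sign and raises the asymptotic polynomial degree by two, since a growth $\bar w_j(r)\sim Cr^A$ feeds through the integration formula to give $\bar w_{j-1}(r)\sim -\tfrac{C}{(n+A)(A+2)}\,r^{A+2}$. By induction on $m$ this yields $\bar w_{j_0-m}(r)\sim (-1)^{m+1}r^{2m}$ for large $r$, so at $m=j_0$, $\bar u_{i_0}(r)\sim (-1)^{j_0+1} r^{2j_0}$. When $j_0$ is even, $\bar u_{i_0}(r)\to -\infty$ directly contradicts the positivity of $u_{i_0}$, closing that case.

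The main obstacle is the case of odd $j_0$, where the cascade only produces the polynomial lower bound $\bar u_{i_0}(r)\gtrsim r^{2j_0}$ — not by itself a contradiction. Here I plan to bootstrap using the structural condition \eqref{general condition} together with Jensen's inequality for spherical averages: the lower bound on $\bar u_{i_0}$ gives $\overline{u_{i_0}^{p_{i_0}}}(r)\gtrsim r^{2j_0 p_{i_0}}$, whereupon \eqref{general condition} forces at least one $\bar w_k^i$ to grow at least like $r^{2j_0 p_{i_0}-\sigma}$. Re-running the cascade from level $k$ with this strictly larger source raises the asymptotic exponent of some $\bar u_i$; because there are only finitely many components and the exponent-to-exponent map is dominated by $\beta\mapsto p_{\min}\beta + (2k-\sigma)$ with $p_{\min}>1$ and $\sigma<\alpha=2k$, iterating the bootstrap drives the exponent past any prescribed polynomial rate. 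The resulting super-polynomial growth of $\bar u_i$ is incompatible with $u_i$ being a positive classical solution whose $k$-th iterated Laplacian is pointwise prescribed by $w_k^i$, producing the desired contradiction.
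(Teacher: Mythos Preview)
Your averaging-and-cascade framework matches the paper's, and the even-$j_0$ case is fine. The genuine gap is in the odd-$j_0$ endgame. You assert that ``super-polynomial growth of $\bar u_i$ is incompatible with $u_i$ being a positive classical solution whose $k$-th iterated Laplacian is pointwise prescribed by $w_k^i$,'' but this is not a contradiction by itself: a positive function can grow faster than any polynomial while satisfying a $k$-th order equation, and your bootstrap only produces a sequence of bounds $\bar u(r)\ge c_m r^{\beta_m}$ for $r\ge R_m$, where the constants $c_m$ may decay and the thresholds $R_m$ may diverge, so no single point sees an infinite lower bound. In fact, when $k$ is even (which is exactly when the delicate case $j_0=k-1$ arises), each pass through the cascade from level $k$ lands on $\bar V$ with a \emph{lower} bound, never an upper one, so the iteration never forces $\bar V<0$. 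The paper closes this gap by two additional devices you do not invoke: first, successive \emph{re-centering} (its Claim~1) to upgrade the asymptotic alternating signs to a pointwise alternating-sign property for all $r$; second, the scaling invariance $u_\lambda(x)=\lambda^{(2k-\sigma)/(p-1)}u(\lambda x)$, which lets one take the initial value $a_0$ arbitrarily large and then run the iteration on $[0,1]$ (where $r^{\beta_k}\le 1$), tracking the constants carefully (its Claim~2) to show $a_k\to\infty$ and hence $\tilde u(1)=\infty$, a genuine contradiction.

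Two smaller points: you need to choose $j_0$ maximal so that $w_{j_0+1}^{i_0}>0$ is actually available; and your plan to hop between components during the bootstrap is shaky, since \eqref{general condition} only controls the sum $\sum_i|x|^{-\sigma_i}f_i$, not any individual $f_i$. The paper handles the system case cleanly by setting $v_\epsilon=u_{i_0}+\epsilon\sum_{i\ne i_0}u_i$, so that $(-\Delta)^k v_\epsilon\ge \epsilon C_\delta|x|^{-\sigma}v_\epsilon^{p}$ reduces the problem to the scalar inequality treated in its Steps~1--2, and then sends $\epsilon\to0$ to contradict $(-\Delta)^{j_0}u_{i_0}(x_0)<0$.
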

\begin{proof}
We start by proving the result for a single differential inequality then prove it for the general systems, which follows naturally from the scalar case. Suppose $u$ is a positive solution of the differential inequality
\begin{equation}\label{single diff ineq}
(-\Delta)^{k}u(x) \geq C|x|^{-\sigma}u(x)^p, ~ x \in \mathbb{R}^n \backslash \{0\}.
\end{equation}
We assume $p>1$ and $\sigma \in (0,\alpha)$, since the unweighted case when $\sigma = 0$ was treated in \cite{CL13} (see also \cite{CFL13,WeiXu99}). Set
\begin{equation}\label{reduction}
u_{j} := (-\Delta)^{j-1}u \,\text{ for }\, j = 1,2,\ldots, k.
\end{equation}

\medskip

\noindent{\bf Step 1.} First, we show that $u_{k} > 0$. On the contrary, we can have two cases:
\begin{enumerate}[(a)]
\item $u_{k}(x_0) < 0$ at some non-zero point $x_0$;
\item $u_{k} \geq 0$ and $u_{k}(x_{0}) = 0$ at some non-zero point $x_0$.
\end{enumerate}
However, if case (b) holds then $x_0$ is a local minimum, but this contradicts with $-\Delta u_{k} > 0$. Thus, we only consider case (a). In addition, we define the average of a function $v$ on the ball of radius $r$ centered at $\ov{x_0}$ by
\begin{equation*}
\ov{v}(r) = \frac{1}{|\partial B_{r}(\ov{x_0})|}\int_{\partial B_{r}(\ov{x_0})} v(x) \,ds.
\end{equation*}
After a reduction of \eqref{single diff ineq} into a second-order system via \eqref{reduction}, then applying averaging centered at $x_0$ and using the well-known property $\ov{\Delta u} = \Delta\ov{u}$, we get
\begin{equation}\label{reduced system}
\left\{ \begin{aligned}
        &-\Delta \ov{u}_{1} = \ov{u}_{2}, \\
        &-\Delta \ov{u}_{2} = \ov{u}_{3},\\
        & \hspace{1.1cm} \vdots \\
        &-\Delta \ov{u}_{k-1} = \ov{u}_{k},\\
        &-\Delta \ov{u}_{k} > 0.
        \end{aligned} \right.
\end{equation}
From H\"{o}lder's inequality,
\begin{align*}
\overline{u}(r)^{p} = {} & \left(\frac{1}{|\partial B_{r}(x_0)|} \int_{\partial B_{r}(x_0)} u(x) \,ds\right)^p \\
\leq {} & \left(\frac{1}{|\partial B_{r}(x_0)|}\int_{\partial B_{r}(x_0)} |x|^{\frac{\sigma}{p-1}} \,ds \right)^{p-1} \left(\frac{1}{|\partial B_{r}(x_0)|}\int_{\partial B_{r}(x_0)} \frac{u(x)^p}{|x|^{\sigma}} \,ds \right) \\
\leq {} & a_r \cdot \frac{1}{|\partial B_{r}(x_0)|}\int_{\partial B_{r}(x_0)} \frac{u(x)^p}{|x|^{\sigma}} \,ds,
\end{align*}
where $a_r := (|x_0| + r)^{\sigma}$. Thus,
\begin{equation}\label{holder}
-\Delta \ov{u}_{k} = -\ov{\Delta u_{k}} \geq Ca_{r}^{-1} \ov{u}^{p}.
\end{equation}
We claim that after sufficient applications of averaging and re-centers, the resulting solution satisfies an alternating sign property.

\noindent{\bf Claim 1:} After sufficient re-centers of $u_{j}$ and denoting the resulting functions by $\tilde{u}_{j}$, we obtain
\begin{equation}\label{alternating}
\tilde{u}_{k} < 0, \tilde{u}_{k-1} > 0, \tilde{u}_{k-2} < 0,\tilde{u}_{k-3} > 0,\ldots.
\end{equation}
\noindent{\it Proof of Claim 1:} From the $k^{th}$ equation in \eqref{reduced system}, we get
$$\ov{u}_{k} '(r) < 0 \,\text{ and }\, \ov{u}_{k}(r) \leq \ov{u}_{k}(0) < 0 \,\text{ for all }\, r \geq 0. $$
Then the $(k-1)^{th}$ equation implies
\[ -(r^{n-1}\ov{u}_{k-1}'(r))' = r^{n-1}\ov{u}_{k}(r) \leq \ov{u}_{k}(0) r^{n-1} \equiv -c r^{n-1}. \]
Upon integrating we get
\[ \ov{u}'_{k-1}(r) > \frac{c}{n}r \,\text{ and }\, \ov{u}_{k-1}(r) \geq \ov{u}_{k-1}(0) + \frac{c}{2n}r^2 \,\text{ for }\, r \geq 0, \]
so that we can choose $r_1$ sufficiently large such that $\ov{u}_{k-1}(r_1) > 0$. Now, choose some $x_1 \in \partial B_{r_1}(0)$ to be the new point to re-center and apply averaging to the system to get
$$\ov{\ov{u}}_{k-1}(0) > 0,$$
and notice that $\ov{\ov{u}}_{j}$ for $j=1,2,\ldots,k$ satisfy \eqref{reduced system}. As before, we get $$\ov{\ov{u}}'_{k-1} > 0 \,\text{ and }\ \ov{\ov{u}}_{k-1}(r) \geq \ov{\ov{u}}_{k-1}(0) > 0 \,\text{ for all }\, r\geq 0.$$
Thus,
$$\ov{\ov{u}}_{k} < 0 \,\text{ and }\, \ov{\ov{u}}_{k-1} > 0.$$
From the $(k-2)^{th}$ equation, we get
\[ -(r^{n-1}\ov{\ov{u}}_{k-2}'(r))' = r^{n-1}\ov{\ov{u}}_{k-1}(r) \geq \ov{\ov{u}}_{k-1}(0) r^{n-1} \equiv c r^{n-1} \]
and integrating this yields
\[\ov{\ov{u}}'_{k-2}(r) > \frac{c}{n}r \,\text{ and }\, \ov{\ov{u}}_{k-2}(r) \leq \ov{\ov{u}}_{k-2}(0) - \frac{c}{2n}r^2 \,\text{ for }\, r \geq 0. \]
Then choose $r_2$ large enough such that $\ov{\ov{u}}_{k-2}(r_2) < 0$ and pick a point $x_2 \in \partial B_{r_2}(0)$ to be the new point to re-center to get $$\ov{\ov{\ov{u}}}_{k} < 0, \ov{\ov{\ov{u}}}_{k-1} > 0 \,\text{ and }\, \ov{\ov{\ov{u}}}_{k-2} < 0.$$
By repeating this procedure, we arrive at the alternating sign property after sufficient re-centers. This completes the proof of Claim 1.

From Claim 1, we must take $k$ to be an even integer, otherwise we get a contradiction with the positivity of $u$. Henceforth, we assume $k$ is even. Now, define the rescaling of $u$ by
\begin{equation}\label{rescale}
u_{\lambda}(x) := \lambda^{\frac{(2k - \sigma)}{{p-1}}}u(\lambda x)
\end{equation}
and notice that \eqref{single diff ineq} is invariant under this scaling i.e. for any $\lambda > 0$, $u_{\lambda}$ remains a solution of \eqref{single diff ineq}. By \eqref{alternating}, $-\Delta \tilde{u} < 0$, which implies $\tilde{u}'(r) > 0$. Thus,
$$ \tilde{u}(r) \geq \tilde{u}(0) = c_0 > 0. $$
By the scaling invariance, we can choose $u_{\lambda}$ to be as large as needed. Thus, for any $a_0$, we may assume
$$\tilde{u}(r)\geq a_0 \geq a_{0}r^{\beta_0} \,\,\text{ for all }\,\, 0 \leq r \leq 1$$
where the positive number $\beta_0$ is specified below. From \eqref{holder}, we have
\begin{equation*}
-(r^{n-1}\tilde{u}_{k}'(r))' \geq Ca_{0}^{p}r^{n-1 + \beta_{0}p}.
\end{equation*}
Integrating this twice with respect to $r$ and applying \eqref{alternating} yields
\begin{equation*}
\tilde{u}_{k}(r) \leq - \frac{Ca_{0}^{p}r^{2+p\beta_0}}{(n+p\beta_0)(2 + p\beta_0)}.
\end{equation*}
Then
\begin{equation*}
-(r^{n-1}\tilde{u}_{k-1}'(r))' = r^{n-1}\tilde{u}_{k} \leq -\frac{Ca_{0}^{p}r^{n-1 + 2+ \beta_{0}p}}{(n+p\beta_0)(2 + p\beta_0)},
\end{equation*}
and integrating this twice yields
\begin{align*}
\tilde{u}_{k-1}(r) \geq {} & \frac{Ca_{0}^{p} r^{2\cdot 2 + p\beta_0}}{(n+p\beta_0)(n+2 + p\beta_0)(2 + p\beta_0)(2\cdot 2 + p\beta_0)} \\
\geq {} & \frac{Ca_{0}^{p} r^{2\cdot 2 + p\beta_0}}{(n+2 + p\beta_0)^{2}(2\cdot 2 + p\beta_0)^{2}} \\
\geq {} & \frac{Ca_{0}^{p} r^{2\cdot 2 + \beta_{0}p}}{(n + 2\cdot 2 + p\beta_0)^{2\cdot 2}}.
\end{align*}
By continuing in this manner, we obtain
\begin{equation}\label{first step iteration}
\tilde{u}(r) \geq \frac{Ca_{0}^{p} r^{2k + p\beta_0}}{(n + 2k + p\beta_0)^{2k}}
\geq \frac{Ca_{0}^{p} r^{m + p\beta_0}}{(m + p\beta_0)^{m}},~ m = n+2k.
\end{equation}
Choose $\beta_0$ so that $\beta_{0}p \geq m$ and define
$$ \beta_{k+1} = 2\beta_{k}p \,\text{ and }\, a_{k+1} = \frac{a_{k}^{p}}{(2\beta_{k}p)^m}$$
so that $\beta_{k+1} \geq m + \beta_{k}p$ and $\beta_k \rightarrow \infty$ as $k\rightarrow \infty$. Notice that assertion \eqref{first step iteration} shows that $$\tilde{u}(r) \geq a_1 \geq a_{1}r^{\beta_1}\,\,\text{ for all }\,\, 0 \leq r \leq 1.$$
If we apply the previous argument successively, it is not too difficult to show that
\begin{equation}\label{lower bound iteration}
\tilde{u}(r) \geq a_{k}r^{\beta_k}\,\text{ for }\, k=0,1,2,\ldots.
\end{equation}

\noindent{\bf Claim 2:} Choose $l$ so that $l(p-1) > 2$ and further suppose $\beta_0 \geq 2^{1+l+p} p^l$. Then
\begin{equation}\label{claim 2}
a_{k}^{p} \geq (\beta_{k}p)^{m(l+1)},~k=0,1,2,\ldots.
\end{equation}
\noindent{\it Proof of Claim 2:} To prove \eqref{claim 2}, we proceed by induction. The initial case $k=0$ holds immediately since we are free to choose $a_0$ to be as large as needed. Now assume the $k^{th}$ case holds i.e. $$a_{k}^{p} \geq (\beta_{k}p)^{m(l+1)}.$$ Then
\begin{align*}
\frac{a_{k+1}^{p}}{(\beta_{k+1} p)^{m(l+1)}} = {} & \frac{\Big[\frac{a_{k}^{p}}{(2 \beta_{k}p)^{m}}\Big]^{p}}{(p \beta_{k+1})^{m(l+1)}}
\geq \frac{(\beta_{k} p)^{p m(l+1)}}{p^{m(l+1)}(2 \beta_{k} p)^{m p}\beta_{k+1}^{m (l+1)}} \\
= {} & \frac{(\beta_{k} p)^{p m(l+1)}}{p^{m (l+1)}(2\beta_{k} p)^{m p + m (l+1)}} = \frac{\beta_{k}^{ m(l(p-1)-1)}}{2^{m (1+ l + p)} p^{m (l+2 + l(1-p))}} \\
\geq {} & \Bigg[\frac{\beta_{k}}{2^{1+ l + p} p^{l+2 - l(p-1)}}\Bigg]^{m} \geq \Bigg[\frac{\beta_{k}}{2^{1+ l + p} p^l}\Bigg]^{m} \geq \Bigg[\frac{\beta_{0}}{2^{1+ l + p} p^l}\Bigg]^{m} \\
\geq {} & 1.
\end{align*}
Hence, the $(k+1)^{th}$ case holds and this completes the proof of Claim 2.

Consequently, estimates \eqref{lower bound iteration} and \eqref{claim 2} imply
\begin{equation*}
\tilde{u}(1) \geq a_k \geq (\beta_{k}p)^{(m(l+1))/p} \rightarrow \infty \,\text{ as }\, k \rightarrow \infty,
\end{equation*}
which is a contradiction. This proves $u_k > 0$.

\medskip

\noindent{\bf Step 2.} We prove $u_j > 0$ for $j= 2,3,\ldots,k-1$.

On the contrary, assume that $u_{j_0} < 0$ at some point for $j_0 \neq k$. Since $u_k > 0$, we can assume that $u_{j_0 + 1},u_{j_0 + 2},\ldots, u_{k} > 0$. By adopting the same arguments in the proof of Claim 1 with several re-centers and denoting the resulting functions by $\tilde{u}_{j}$, we obtain the alternating sign property $$\tilde{u}_{1} > 0, \tilde{u}_{2}<0,\ldots, \tilde{u}_{j_0 -1} > 0, \tilde{u}_{j_0} < 0 \,\text{ and }\, \tilde{u}_{j_0 + 1},\tilde{u}_{j_0 + 2},\ldots, \tilde{u}_{k} > 0. $$
Thus, by the positivity of $u$ we obtain $-\Delta \tilde{u} < 0$ and $\tilde{u} > 0$, which implies
\begin{equation}\label{positivity}
\tilde{u}(r) \geq \tilde{u}(0) \equiv c_0 > 0 \,\text{ for }\, r \geq 0.
\end{equation}
Then from the $k^{th}$ equation we get $$-(r^{n-1}\tilde{u}_{k}'(r))' \geq Ca^{-1}_{r}\tilde{u}(r)^{p}r^{n-1}\geq Ca^{-1}_{r}r^{n-1},$$
where $a_{r} := (C + r)^{\sigma}$. Integrating this with respect to $r$ yields $$-r\tilde{u}_{k}'(r) \geq Ca^{-1}_{r} r^{2},$$
then applying the standard identity (see Lemma 3.1 in \cite{Mitidieri93})
\begin{equation}\label{standard id}
(n-2)\tilde{u}_{k}(r) + r \tilde{u}_{k}'(r) \geq 0
\end{equation}
yields
\begin{equation}\label{u_k}
\tilde{u}_{k}(r) \geq Ca_{r}^{-1}r^{2}.
\end{equation}
Then integrating the $(k-1)^{th}$ equation, $-(r^{n-1}\tilde{u}_{k-1}'(r))' = r^{n-1}\tilde{u}_{k}(r) $, yields $$-r\tilde{u}_{k-1}'(r) \geq C\tilde{u}_k(r)r^2.$$
Combining this estimate with \eqref{u_k} gives us
$$ \tilde{u}_{k-1}(r) \geq -Cr\tilde{u}_{k-1}'(r) \geq Ca_{r}^{-1}r^{2\cdot 2}.$$
By continuing in this way up to the $(j_{0}+1)^{th}$ equation, we obtain
$$ \tilde{u}_{j_{0}+1}(r) \geq Ca^{-1}_{r}r^{2(k-j_{0})}. $$
From here, we basically mimic the steps at the end of Claim 1; that is, from the $j_{0}^{th}$ equation, $-\Delta \tilde{u}_{j_0}(r) = \tilde{u}_{j_{0}+1}(r) \geq Ca_{r}^{-1}r^{2(k-j_{0})}$, and since $\tilde{u}_{j_{0}} < 0$, we can easily obtain
$$ \tilde{u}_{j_0}(r) \leq -Ca_{r}^{-1}r^{2(k-j_0) + 2}. $$
Likewise, the $(j_{0}-1)^{th}$ equation and $\tilde{u}_{j_{0}-1} > 0$ will imply
$$ \tilde{u}_{j_0 - 1}(r) \geq Ca_{r}^{-1}r^{2(k-j_0) + 2\cdot 2}.  $$
We continue in this way using the alternating sign property with the positivity of $u$ to arrive at the improved lower bound
\begin{equation}\label{lower u}
\tilde{u}(r) \geq Ca_{r}^{-1}r^{2k}.
\end{equation}
Therefore, if $2 - \sigma + (2k - \sigma)p > 0$, we can integrate the resulting inequality from the $k^{th}$ equation,
$$-(r^{n-1}\tilde{u}'_{k}(r))' \geq Ca_{r}^{-1}r^{n-1}\tilde{u}(r)^p \geq Ca_{r}^{-1}r^{n-1}(a^{-1}_{r}r^{2k})^p, $$
to get for some fixed $r_0 > 0$
$$\tilde{u}_{k}(r) \leq \tilde{u}_{k}(0) - Cr^{2-\sigma + (2k -\sigma)p} \,\text{ for }\, r \geq r_0.$$
Then we can choose $R$ suitably large so that $\tilde{u}_{k}(R) \leq 0$, but this is impossible. Otherwise, if $2 - \sigma + (2k - \sigma)p \leq 0$, we can repeat the procedure for obtaining \eqref{lower u} to find a suitably large $m > 1$ depending on $p$ for which $2 - \sigma + m(2k-\sigma)p > 0$ and $\tilde{u}(r) \geq C(a_{r}^{-1}r^{2k})^{m}$. By applying this to the $k^{th}$ equation and integrating, we will arrive at the same contradiction. This completes the proof of Step 2.

\medskip

\noindent{\bf Step 3.} We are ready to prove the theorem for a system of poly-harmonic equations. Assume the contrary; that is, assume for some $i_0,j_0$,
\begin{equation}\label{negative}
(-\Delta)^{j_0}u_{i_0} < 0 \,\text{ at some point.}\,
\end{equation}
Set $v = u_1 + u_2 + \ldots + u_{L}$ and $v_{\epsilon} = u_{i_0} + \epsilon\sum_{i\neq i_0} u_{i}$ for each small $\epsilon > 0$. In view of condition \eqref{general condition}, we can find $p>1$, $\sigma \in [0,\alpha)$ and a suitable constant $C_{\delta} >0$ such that for $v \geq \delta$,
\begin{align*}
(-\Delta)^{k}v_{\epsilon}(x) \geq {} & |x|^{-\sigma_{i_0}}f_{i_0}(x,u_{1},\ldots,u_{L}) + \epsilon\sum_{i\neq i_0} |x|^{-\sigma_i}f_{i}(x,u_{1},\ldots,u_{L}) \\
\geq {} & \epsilon C_{\delta}|x|^{-\sigma}v_{\epsilon}(x)^{p}.
\end{align*}
Here we can apply the same procedures in steps 1 and 2 that we used to derive the super poly-harmonic property for \eqref{single diff ineq} to arrive at
$$ (-\Delta)^{j}v_{\epsilon} > 0, \,\text{ for }\, j=1,2,\ldots,k-1.$$
Then, by choosing $\epsilon$ sufficiently small, we can arrive at a contradiction with \eqref{negative} thereby completing the proof of the theorem.
\end{proof}

Next are two lemmas concerning integrability properties of solutions whose proofs require the following basic results on a fundamental boundary value problem. For each $r> 0$, let $\varphi_{r}(x)$ be the solution of
\begin{align*}
  \left\{\begin{array}{lll}
    (-\Delta)^{k}\varphi(x) = \delta_{x_0}(x) & \text{ in } & B_{r}(x_0), \\
    \varphi = \Delta \varphi = \ldots = \Delta^{k-1}\varphi = 0 & \text{ on } & \partial B_{r}(x_0),
  \end{array}
\right.
\end{align*}
where $\delta_{x_0}(x) = \delta(x-x_0)$ is the centered Dirac delta function. Then
\begin{equation}\label{Hopf}
\frac{\partial}{\partial n}\Big[ (-\Delta)^{k-j}\varphi_{r}(x) \Big] \leq 0 \,\text{ on }\, \partial B_{r}(x_0),\,\text{ for }\, j=1,2,\ldots, k;
\end{equation}
\begin{equation}\label{boundary estimate}
\Big | \frac{\partial}{\partial n}[(-\Delta)^{k-j}\varphi_{r}(x)] \Big |\leq \frac{C}{r^{n+1-2j}} \,\text{ on }\, \partial B_{r}(x_0),\,\text{ for }\, j=1,2,\ldots,k;
\end{equation}
\begin{equation}\label{delta convergence}
\varphi_{r}(x) \rightarrow \frac{c}{|x - x_0|^{n-2k}} \,\text{ as }\, r \rightarrow \infty;
\end{equation}
and
\begin{equation}\label{higher delta convergence}
(-\Delta)^{j-1}\varphi_{r}(x) \rightarrow \frac{c}{|x - x_0|^{n-2(k-j+1)}} \,\text{ as }\, r \rightarrow \infty
\end{equation}
for $j = 2,3,\ldots,k$. Another basic result we often invoke is the following.
\begin{lemma}\label{basic}
If $u\in L^{1}(\mathbb{R}^n)$ and $x_0$ is some point in $\mathbb{R}^n$, then we can find a sequence $\{ r_{\ell} \} \rightarrow \infty$ such that
\begin{equation*}
r_{\ell}\int_{\partial B_{r_{\ell}}(x_0)} |u(x)| \,ds \rightarrow 0.
\end{equation*}
\end{lemma}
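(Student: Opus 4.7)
The plan is to reduce the claim to a one-dimensional statement via polar coordinates centered at $x_0$. Define
\[
g(r) := \int_{\partial B_{r}(x_0)} |u(x)|\,ds, \qquad r > 0.
\]
Since $u \in L^{1}(\mathbb{R}^n)$, Fubini's theorem (or the coarea formula) gives
\[
\int_{0}^{\infty} g(r)\,dr = \int_{\mathbb{R}^n} |u(x)|\,dx < \infty,
\]
so $g \in L^{1}(0,\infty)$. The assertion then reduces to showing that $\liminf_{r \to \infty} r\, g(r) = 0$, from which a sequence $r_{\ell}\to\infty$ with $r_{\ell}\, g(r_{\ell}) \to 0$ can be extracted immediately.

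To establish $\liminf_{r \to \infty} r\, g(r) = 0$, I will argue by contradiction. Suppose to the contrary that there exist $\delta > 0$ and $R > 0$ such that
\[
r\, g(r) \geq \delta \quad \text{for all } r \geq R.
\]
Then $g(r) \geq \delta/r$ for all $r \geq R$, and consequently
\[
\int_{R}^{\infty} g(r)\,dr \;\geq\; \delta \int_{R}^{\infty} \frac{dr}{r} \;=\; +\infty,
\]
contradicting the integrability of $g$ on $(0,\infty)$. Hence $\liminf_{r \to \infty} r\, g(r) = 0$, which means we can choose $r_{\ell} \to \infty$ with $r_{\ell}\, g(r_{\ell}) \to 0$, as required.

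There is no real obstacle here; the argument is purely measure-theoretic and does not interact with any of the structure of the polyharmonic system. The only point that requires a moment of care is the Fubini step converting the $L^1$ norm of $u$ into an integral of the spherical averages $g(r)$ over radii, but this is standard since the Lebesgue measure on $\mathbb{R}^n$ disintegrates into surface measures on concentric spheres about any fixed point $x_0$.
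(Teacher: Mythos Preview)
Your argument is correct and is precisely the standard proof of this fact. Note that the paper does not actually supply its own proof of this lemma; it merely cites the references \cite{CL13,CLO06,LL13} for the details, so there is nothing further to compare.
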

For more details on these basic properties and Lemma \ref{basic}, we refer the reader to \cite{CL13,CLO06} and \cite{LL13}.
\begin{lemma}\label{integrable}
Let $\alpha = 2k$ be some positive integer and let $u$ be a solution of \eqref{pde}. Write
$$ u_{ij} = (-\Delta)^{j-1} u_i,~i=1,2,\ldots,L,~j = 1,2,\ldots, k.$$
Then
\begin{equation}\label{integrable 1}
c\int_{\mathbb{R}^n} \frac{1}{|x - x_0|^{n-2k}|x|^{\sigma_i}}f_{i}(x,u_{1}(x),\ldots,u_{L}(x)) \,dx \leq u_{i}(0) < \infty,
\end{equation}
and
\begin{equation}\label{integrable 2}
\int_{\mathbb{R}^n} \frac{u_{ij}(x)}{|x - x_0|^{n+2-2j}} \,dx < \infty \,\text{ for }\, i=1,2,\ldots, L,~j = 2,3,\ldots, k.
\end{equation}
\end{lemma}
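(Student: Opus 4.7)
The plan is to test each PDE against the appropriate poly-harmonic Green's function on expanding balls, exploit the favourable sign structure of the surviving boundary contributions, and pass to the limit. Throughout I read $u_i(0)$ in \eqref{integrable 1} as $u_i(x_0)$.

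For \eqref{integrable 1}, multiply $(-\Delta)^k u_i = |x|^{-\sigma_i} f_i(x,u)$ by $\varphi_r$ and integrate over $B_r(x_0)$. Iterating the second-order Green's identity $k$ times and using the Navier conditions $\varphi_r = \Delta\varphi_r = \cdots = \Delta^{k-1}\varphi_r = 0$ eliminates every boundary term except those of the form $(-\Delta)^{l-1}u_i \cdot \tfrac{\partial}{\partial n}[(-\Delta)^{k-l}\varphi_r]$, yielding
\[
\int_{B_r(x_0)} \varphi_r\, |x|^{-\sigma_i} f_i(x,u)\,dx - u_i(x_0) = \sum_{l=1}^{k}\int_{\partial B_r(x_0)} (-\Delta)^{l-1}u_i \cdot \frac{\partial}{\partial n}\bigl[(-\Delta)^{k-l}\varphi_r\bigr]\,ds.
\]
Each factor $(-\Delta)^{l-1}u_i$ is strictly positive (positivity of $u_i$ for $l=1$ and Lemma \ref{super polyharmonic} for $l=2,\ldots,k$), while the Hopf-type estimate \eqref{Hopf} forces $\tfrac{\partial}{\partial n}[(-\Delta)^{k-l}\varphi_r] \leq 0$ on $\partial B_r(x_0)$. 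Hence the right-hand side is $\leq 0$ and
\[
\int_{B_r(x_0)} \varphi_r(x)\, |x|^{-\sigma_i} f_i(x,u(x))\,dx \leq u_i(x_0) \quad \text{for every } r>0.
\]
Since $\varphi_r$ increases monotonically in $r$ (by the maximum principle applied to $\varphi_{r_2} - \varphi_{r_1}$) and converges pointwise to $c|x-x_0|^{-(n-2k)}$ by \eqref{delta convergence}, monotone convergence delivers \eqref{integrable 1}; continuity of $u_i$ guarantees $u_i(x_0) < \infty$.

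The proof of \eqref{integrable 2} is the same argument executed at a lower order. For each fixed $j \in \{2,\ldots,k\}$ let $\psi_r$ solve $(-\Delta)^{j-1}\psi_r = \delta_{x_0}$ in $B_r(x_0)$ with Navier conditions of order $j-1$; by \eqref{higher delta convergence} one has $\psi_r \to c|x-x_0|^{-(n+2-2j)}$. Testing the relation $(-\Delta)^{j-1}u_i = u_{ij}$ against $\psi_r$ and iterating integration by parts $j-1$ times yields
\[
\int_{B_r(x_0)}\psi_r\, u_{ij}\,dx - u_i(x_0) = \sum_{l=1}^{j-1}\int_{\partial B_r(x_0)}(-\Delta)^{l-1}u_i \cdot \frac{\partial}{\partial n}\bigl[(-\Delta)^{j-1-l}\psi_r\bigr]\,ds,
\]
and the right-hand side is again $\leq 0$ termwise by Lemma \ref{super polyharmonic} and the analogue of \eqref{Hopf} for $\psi_r$. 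Passing $r \to \infty$ with monotone convergence then gives $c\int_{\mathbb{R}^n} u_{ij}(x)/|x-x_0|^{n+2-2j}\,dx \leq u_i(x_0) < \infty$.

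The principal technical task is the bookkeeping in the iterated Green's identity: one must verify that the Navier conditions on $\varphi_r$ (respectively $\psi_r$) precisely annihilate the ``wrong'' boundary contributions, so that what survives is a sum of products of two signed quantities. Once that pairing is in hand, both estimates reduce to the uniform-in-$r$ bound together with monotone convergence, and no further delicacy is needed.
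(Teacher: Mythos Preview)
Your proposal is correct and follows essentially the same route as the paper: multiply the PDE by the Navier poly-harmonic Green's function on $B_r(x_0)$, integrate by parts repeatedly, discard the boundary terms via the super poly-harmonic property combined with the Hopf-type sign condition \eqref{Hopf}, and pass to the limit using \eqref{delta convergence}--\eqref{higher delta convergence}. The only cosmetic difference is that for \eqref{integrable 2} the paper tests against $(-\Delta)^{k-j+1}\varphi_r$ rather than introducing a separate lower-order Green's function $\psi_r$, but these are the same object, and your added remark about monotonicity of $\varphi_r$ in $r$ (to justify monotone convergence) is a welcome clarification that the paper leaves implicit.
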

\begin{proof}
Multiply both sides of the $i^{th}$ equation,
\begin{equation}\label{ith equation}
(-\Delta)^{k}u_{i}(x) = |x|^{-\sigma_i}f_{i}(x,u_{1}(x),\ldots,u_{L}(x)),
\end{equation}
by $\varphi_{r}(x)$ and integrate on $B_{r}(x_0)$. After successive integration by parts, using the super poly-harmonic property of solutions and \eqref{Hopf}, we get
\begin{align*}
\int_{B_{r}(x_0)} |x|^{-\sigma_i}f_{i}(x,u_{1}(x), {} & \ldots,u_{L}(x))\varphi_{r}(x) \,dx \\
= {} & u_{i}(x_0) + \sum_{j=1}^{k} \int_{\partial B_{r}(x_0)} u_{ij}(x)\frac{\partial}{\partial n}\Big[(-\Delta)^{k-j}\varphi_{r}(x)\Big]\,ds \\
\leq {} & u_{i}(x_0).
\end{align*}
Using this and \eqref{delta convergence}, we can send $r \rightarrow \infty$ to get \eqref{integrable 1}. The proof of \eqref{integrable 2} follows from similar calculations by considering the equation $u_{ij} = (-\Delta)^{j-1}u_{i}$ instead of \eqref{ith equation}, multiplying this equation by $(-\Delta)^{k-j+1}\varphi_r$, integrating on $B_{r}(x_0)$ and then applying \eqref{higher delta convergence}.
\end{proof}
A consequence of Lemma \ref{integrable} is the following.
\begin{lemma}\label{vanishing boundary integral}
There exists a sequence $\{ r_{\ell} \} \rightarrow \infty$ for which
\begin{equation}\label{vbi}
\displaystyle\int_{\partial B_{r_{\ell}}(x_0)} \frac{u_{ij}}{r_{\ell}^{n+1-2j}} \,ds \rightarrow 0 \,\text{ for }\, i=1,2,\ldots,L,\, j=1,2,\ldots,k.
\end{equation}
\end{lemma}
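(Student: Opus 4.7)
The strategy is to apply the averaging principle of Lemma \ref{basic} to suitable $L^1$ functions built from the $u_{ij}$, with the cases $j \geq 2$ following quickly from Lemma \ref{integrable} and the case $j=1$ requiring a separate decay argument.

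For $j \in \{2, 3, \ldots, k\}$, the plan is to pool all of the relevant weighted functions into one, setting
\[
\Phi(x) := \sum_{i=1}^{L} \sum_{j=2}^{k} \frac{u_{ij}(x)}{|x-x_0|^{n+2-2j}},
\]
which is non-negative and lies in $L^1(\mathbb{R}^n)$ by \eqref{integrable 2}. Lemma \ref{basic} then produces a single sequence $\{r_\ell\} \to \infty$ with $r_\ell \int_{\partial B_{r_\ell}(x_0)} \Phi \, ds \to 0$. Since every summand is non-negative and $|x-x_0| = r_\ell$ on $\partial B_{r_\ell}(x_0)$, this forces $r_\ell^{-(n+1-2j)} \int_{\partial B_{r_\ell}(x_0)} u_{ij} \, ds \to 0$ for every pair $(i,j)$ with $j \geq 2$ simultaneously.

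The remaining case $j=1$ is handled by showing that the spherical mean $\bar u_i(r) := |\partial B_r(x_0)|^{-1} \int_{\partial B_r(x_0)} u_i \, ds$ vanishes as $r \to \infty$. By Lemma \ref{super polyharmonic}, $-\Delta u_i = u_{i,2} > 0$, so $u_i$ is super-harmonic; hence $\bar u_i(r)$ is non-increasing in $r$ and converges to some $c_0 \geq 0$. Supposing for contradiction that $c_0 > 0$, Jensen's inequality applied to $t \mapsto t^{p_i}$ (with $p_i > 1$) yields $\int_{\partial B_r(x_0)} u_i^{p_i} \, ds \geq c_0^{p_i}\,\omega_{n-1}\, r^{n-1}$ for every $r$. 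On the other hand, combining condition \eqref{general condition} with \eqref{integrable 1} (summed over $i$) gives
\[
\int_{\mathbb{R}^n} \frac{u_i(x)^{p_i}}{|x|^{\sigma}|x-x_0|^{n-2k}} \, dx < \infty.
\]
Passing to polar coordinates about $x_0$ and using $|x|^{\sigma} \leq (2r)^{\sigma}$ on $\partial B_r(x_0)$ for $r$ large produces a lower bound of order $\int_R^\infty r^{2k-\sigma-1}\,dr$, which diverges since $\sigma < 2k$. This contradiction forces $c_0 = 0$, so $\bar u_i(r) \to 0$ as $r \to \infty$, and the $j=1$ limit holds along any sequence, in particular along $\{r_\ell\}$ obtained above.

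The main obstacle is precisely the $j = 1$ case: \eqref{integrable 2} does not extend down to that index, so one cannot appeal directly to Lemma \ref{basic} with the function $u_i/|x-x_0|^n$. The resolution exploits the super-harmonicity furnished by Lemma \ref{super polyharmonic} together with the pointwise $u_i^{p_i}$-lower bound from \eqref{general condition} to force decay of the spherical mean of $u_i$ at infinity.
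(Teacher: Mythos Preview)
Your argument is correct. For $j\ge 2$ you do essentially what the paper does: pool the non-negative integrands from \eqref{integrable 2} and apply Lemma \ref{basic} once to obtain a common subsequence. For $j=1$, however, the paper takes a different route. Rather than invoking super-harmonicity, it bounds the spherical mean directly via H\"older's inequality,
\[
\frac{1}{r^{n-1}}\int_{\partial B_r(x_0)} u_i\,ds
\;\le\; \frac{C}{r^{(2k-\sigma)/p_i}} \Bigg( \int_{\partial B_r(x_0)} \sum_{m=1}^{L} \frac{f_m(x,u)}{|x-x_0|^{n-1-2k}|x|^{\sigma_m}}\,ds \Bigg)^{1/p_i},
\]
and then uses \eqref{integrable 1} together with Lemma \ref{basic} to extract a subsequence along which the right-hand side vanishes. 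Your approach, via the monotonicity of $\bar u_i(r)$ and Jensen's inequality, yields the stronger conclusion that $\bar u_i(r)\to 0$ along \emph{every} sequence $r\to\infty$, so compatibility with the subsequence chosen for $j\ge 2$ is automatic. The paper's H\"older argument, in turn, avoids any appeal to Lemma \ref{super polyharmonic} and sidesteps the (minor) question of whether the mean-value monotonicity survives the possible singularity at the origin; it works entirely from the integrability already supplied by Lemma \ref{integrable}.
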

\begin{proof}
For each $i=1,2,\ldots,L$ and $j=2,3,\ldots,k$, the result follows directly from \eqref{integrable 2} and Lemma \ref{basic}, so it remains to verify the lemma for $j=1$. Observe that from \eqref{integrable 1} we can find a sequence $\{ r_{\ell} \} \rightarrow \infty$ for which
\begin{equation}\label{vbu}
\int_{\partial B_{r_{\ell}}(x_0)} \sum_{m=1}^{L} \frac{f_{m}(x,u_{1}(x),\ldots,u_{L}(x))}{|x-x_0|^{n-1-2k}|x|^{\sigma_m}} \,ds \rightarrow 0.
\end{equation}
Then H\"{o}lder's inequality and \eqref{general condition} imply
\begin{align*}
\int_{\partial B_{r}(x_0)} {} & \frac{u_{i}(x)}{|x-x_0|^{n-1}} \,ds \\
\leq {} & \frac{C}{r^{n-1}} r^{(n-1-2k + \sigma + (n-1)(p_i -1))/p_i} \Big(\int_{\partial B_{r}(x_0)} \frac{u_{i}(x)^{p_{i}}}{|x-x_0|^{n-1-2k}|x|^{\sigma}} \,ds \Big)^{\frac{1}{p_i}} \\
\leq {} & \frac{C}{r^{(2k-\sigma)/p_i}}\Big(\int_{\partial B_{r}(x_0)} \frac{u_{i}(x)^{p_i}}{|x-x_0|^{n-1-2k}|x|^{\sigma}} \,ds \Big)^{\frac{1}{p_i}}\\
\leq {} & \frac{C}{r^{(2k-\sigma)/p_i}}\Big( \int_{\partial B_{r}(x_0)} \sum_{m=1}^{L}\frac{f_{m}(x,u_{1}(x),\ldots,u_{L}(x))}{|x-x_0|^{n-1-2k}|x|^{\sigma_m}} \,ds \Big)^{\frac{1}{p_i}}.
\end{align*}
This estimate and \eqref{vbu} imply that we can find a sequence $\{ r_{\ell} \} \rightarrow \infty$ such that
$$\frac{1}{r_{\ell}^{n-1}}\int_{\partial B_{r_{\ell}}(x_0)} u_{i}(x)\,ds \rightarrow 0, \,~\, i=1,2,\ldots,L.$$
This completes the proof.
\end{proof}

\subsection{Proof of Theorem \ref{theorem1}}
We begin by showing the solutions of the PDE system are, up to a multiplicative constant, solutions of the integral system. First, multiply both sides of the $i^{th}$ equation, $$(-\Delta)^{k}u_{i}(x) = |x|^{-\sigma_i}f_{i}(x,u_{1}(x),\ldots,u_{L}(x)),$$ by $\varphi_{r}(x)$ then integrate on $B_{r}(x_0)$. As before, we calculate
\begin{align}\label{main id}
\int_{B_{r}(x_0)} |x|^{-\sigma_i}f_{i}(x,u_{1}(x), {} & \ldots,u_{L}(x)) \varphi_{r}(x) \,dx \notag \\
= {} & u_{i}(x_0) + \sum_{j=1}^{k} \int_{\partial B_{r}(x_0)} u_{ij}(x)\frac{\partial}{\partial n}\Big[(-\Delta)^{k-j}\varphi_{r}(x)\Big]\,ds.
\end{align}
From \eqref{boundary estimate} and Lemma \ref{vanishing boundary integral}, there exists a sequence $r:=r_{\ell} \rightarrow \infty$ in which the boundary integrals in \eqref{main id} vanish. By virtue of \eqref{delta convergence}, \eqref{integrable 1} and the Lebesgue dominated convergence theorem, we conclude that
\begin{equation*}
u_{i}(x_0) = c\int_{\mathbb{R}^n} \frac{1}{|x_0 - y|^{n-2k}|y|^{\sigma_i}}f_{i}(y,u_{1}(y),\ldots,u_{L}(y))\,dy,~i=1,2,\ldots,L.
\end{equation*}

Conversely, showing that the solutions of \eqref{ie} are solutions of \eqref{pde} follows from more elementary arguments. Namely, since $c|x|^{\alpha-n}$ is the fundamental solution of $(-\Delta)^{k}u = \delta_0$, differentiating the integral equations with respect to $x$ and using the convolution properties of the Dirac delta function will show that positive solutions of the integral equations satisfy \eqref{pde}. This completes the proof of the theorem. \qed

\section{Proof of Theorem \ref{theorem2}}\label{section3}
Prior to proving Theorem \ref{theorem2}, we establish a general Liouville type theorem for the Hardy--Sobolev type system with any pair of double bounded coefficients.
\begin{theorem}\label{non-existence}
Let $p,q>0$. Then the integral system \eqref{whls ie} has no positive solution for any double bounded coefficients $c_{1}(x)$ and $c_{2}(x)$ whenever $pq \leq 1$ or if $pq > 1$ and
\begin{equation}\label{non-existence criteria}
\max\Bigg\{ \frac{\alpha(q+1)-(\sigma_1 +\sigma_2 q)}{pq - 1}, \frac{\alpha(p+1)-(\sigma_2 + \sigma_1 p)}{pq - 1} \Bigg\} \geq n - \alpha.
\end{equation}
\end{theorem}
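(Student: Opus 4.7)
My plan is to prove non-existence by a bootstrap on the polynomial decay rates of $u$ and $v$ at infinity. The double boundedness $1/C\le c_i(x)\le C$ lets me replace the coefficients by constants in one-sided estimates, so the argument reduces to the constant-coefficient system up to harmless multiplicative factors.

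The starting point is the baseline bound obtained by restricting the integral representation of $u(x)$ to a fixed ball $B_1$ on which $\int_{B_1} v(y)^q|y|^{-\sigma_1}\,dy>0$, and using $|x-y|\le 1+|x|$: this gives $u(x)\ge C(1+|x|)^{-(n-\alpha)}$, and symmetrically for $v$, so the initial decay exponent is $a_0=n-\alpha$. The main iteration step is that from $u(y)\ge c|y|^{-a}$ on $\{|y|\ge R_0\}$, restricting the integral for $v(x)$ to the annulus $\{|x|/2\le|y|\le 2|x|\}$ and rescaling $y=|x|z$ yields $v(x)\ge C|x|^{-(ap+\sigma_2-\alpha)}$ for $|x|$ large; repeating in the reverse direction produces $u(x)\ge C|x|^{-a'}$ with $a' = pq\,a-(pq-1)\theta_1$, so the deviation $a-\theta_1$ is multiplied by $pq$ per round, with the symmetric statement involving $\theta_2$.

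Under $\max\{\theta_1,\theta_2\}\ge n-\alpha$, I may assume $\theta_1\ge a_0$ without loss of generality. If $pq>1$ and the inequality is strict, iterating gives $a_k-\theta_1=(pq)^k(a_0-\theta_1)\to -\infty$, so eventually $a_k<0$ and $u(y)\ge c|y|^{|a_k|}$ grows at infinity; plugged into $v(x_0)=c_2(x_0)\int u(y)^p|y|^{-\sigma_2}|x_0-y|^{\alpha-n}\,dy$, the integrand is non-integrable at infinity (since $\sigma_2<\alpha$ guarantees divergence for any polynomial growth of $u$), contradicting $v(x_0)<\infty$. When $pq<1$ the numerator $\alpha(q+1)-\sigma_1-q\sigma_2=(\alpha-\sigma_1)+q(\alpha-\sigma_2)$ is strictly positive, so $\theta_1<0$ and the iteration contracts toward this negative fixed point, again producing $a_k<0$ in finitely many steps. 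When $pq=1$ the recursion degenerates to $a_{k+1}=a_k-[\alpha(q+1)-\sigma_1-q\sigma_2]$, an arithmetic progression with a fixed strictly negative step, so once more $a_k<0$ after finitely many iterations, and the same divergence argument closes each case.

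The hardest step will be the borderline case $pq>1$ with equality $\theta_1=n-\alpha$, where the polynomial iteration is stationary. My plan there is to refine the bootstrap by tracking logarithmic factors: at the critical exponent the inner annulus $\{1\le|y|\le|x|/2\}$ in the iteration step sits exactly at a log threshold, so one round upgrades $u(y)\ge C|y|^{-(n-\alpha)}$ to $u(y)\ge C(\log|y|)/|y|^{n-\alpha}$, and successive iterations compound a growing power $(\log|y|)^{m_k}$ with $m_k\to\infty$; once $m_k$ crosses a threshold determined by $p,q,\sigma_i$, the integrand defining $v(x_0)$ fails to be integrable at infinity and the divergence argument of the previous paragraph applies again.
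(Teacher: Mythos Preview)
Your treatment of the non-borderline cases (the polynomial bootstrap starting from $a_0=n-\alpha$, the recursion $a'=pq\,a-(pq-1)\theta_1$, and the divergence once some $a_k<0$) is essentially the same as the paper's proof, and it is correct.

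The gap is in the borderline case $pq>1$, $\theta_1=n-\alpha$. Your proposed endgame is wrong: you claim that once the exponent $m_k$ on $\log|y|$ is large enough, the integrand defining $v(x_0)$ (or $u(x_0)$) fails to be integrable at infinity. It never does. At the fixed point $\theta_1=n-\alpha$ one has $q b_0+\sigma_1=n$ with $b_0=(n-\sigma_1)/q>0$, so substituting $u(y)\ge C|y|^{-(n-\alpha)}(\log|y|)^{m}$ into the integral for $v(x_0)$ gives an integrand comparable to $|y|^{-(n+b_0)}(\log|y|)^{pm}$ at infinity, which is integrable for \emph{every} $m$; the symmetric check for $u(x_0)$ gives $|y|^{-(2n-\alpha)}(\log|y|)^{pqm}$, again integrable. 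A logarithmic factor cannot overturn a strictly integrable power, so ``crossing a threshold'' in $m_k$ produces no divergence. The log bootstrap \emph{can} be salvaged, but the contradiction has to come from a different place: one must track the multiplicative constants $C_k$ through the recursion $m\mapsto pq\,m+1$, show that $\log C_k\ge -c(pq)^k$ while $m_k\sim (pq)^k/(pq-1)$, and then evaluate the lower bound $u(x)\ge C_k|x|^{-(n-\alpha)}(\log|x|)^{m_k}$ at a single fixed point $x$ with $\log|x|$ large enough to dominate the constants, forcing $u(x)=\infty$. This is delicate and is not what you described.

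The paper avoids this entirely. In the equality case it abandons the pointwise bootstrap and instead proves the integral inequality
\[
\int_{B_R(0)}\frac{u^p}{|x|^{\sigma_2}}\,dx \;\ge\; C\Bigl(\int_{B_R(0)}\frac{u^p}{|x|^{\sigma_2}}\,dx\Bigr)^{pq}
\]
with $C$ independent of $R$ (the independence is exactly where $\theta_1=n-\alpha$ is used), which forces $|x|^{-\sigma_2}u^p\in L^1(\mathbb{R}^n)$; the same estimate on the annulus $B_{2R}\setminus B_R$ then gives $\int_{B_{2R}\setminus B_R}|x|^{-\sigma_2}u^p\ge C(\int_{B_R}|x|^{-\sigma_2}u^p)^{pq}$, and letting $R\to\infty$ yields $\int_{\mathbb{R}^n}|x|^{-\sigma_2}u^p=0$, a contradiction. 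This two-line $L^1$ argument is much cleaner than any careful log-bookkeeping.
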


\begin{remark}
We also refer the reader to the earlier papers \cite{Caristi2008,DAmbrosioMitidieri14} for more Liouville type theorems and other interesting results related to the Hardy--Sobolev type systems.
\end{remark}

\begin{proof}[Proof of Theorem \ref{non-existence}]
We proceed by contradiction. That is, assume there is a positive solution $(u,v)$. Let $|x| > R$ for some suitable $R>0$. Note that $|x-y|\leq 2|x|$ for $y\in B_{R}(0)$ and there holds
\begin{equation*}
u(x) \geq C\int_{B_{R}(0)} \frac{v(y)^q}{|x-y|^{n-\alpha}|y|^{\sigma_1}}\,dy \geq \frac{C}{|x|^{a_0}},
\end{equation*}
where $a_0 = n-\alpha$. This estimate and the integral equations imply that for $|x| > R$,
\begin{equation*}
v(x) \geq C\int_{B_{|x|/2}(x)} \frac{|y|^{-p a_0}}{|x-y|^{n-\alpha}|y|^{\sigma_2}}\,dy \geq \frac{C}{|x|^{b_0}},
\end{equation*}
where $b_0 = p a_0 - \alpha + \sigma_2$. Hence, for $|x| > R$
\begin{equation*}
u(x) \geq C\int_{B_{|x|/2}(x)} \frac{|y|^{-q b_0}}{|x-y|^{n-\alpha}|y|^{\sigma_1}}\,dy \geq \frac{C}{|x|^{a_1}},
\end{equation*}
where $a_1 = qb_0 - \alpha + \sigma_1$.
By induction, we obtain
\begin{equation*}
u(x) \geq \frac{C}{|x|^{a_j}} \,\text{ and }\, v(x) \geq \frac{C}{|x|^{b_j}} \,\text{ for }\, |x| > R,
\end{equation*}
where $$a_{k+1} = q b_k - \alpha + \sigma_1 \,\text{ and }\, b_{k} = p a_{k} - \alpha + \sigma_2 \,\text{ for }\, k=0,1,2,\ldots.$$
It follows that
\begin{align}\label{sequence}
a_j = {} & q b_{j-1} - \alpha + \sigma_1 = pq a_{j-1} - \alpha(1+q) \sigma_1 + \sigma_2 q \notag \\
= {} & pq(q b_{j-2} - \alpha + \sigma_1) - \alpha(1+q) + \sigma_1 + \sigma_2 q \notag \\
= {} & (pq)^2 a_{j-2} - (\alpha(1+q) - (\sigma_1 + \sigma_2 q))(pq + 1) \notag \\
= {} & (pq)^3 a_{j-3} - (\alpha(1+q) - (\sigma_1 + \sigma_2 q))((pq)^2 + pq + 1)
 \notag \\
\vdots \notag \\
= {} & (pq)^j a_{0} - (\alpha(1+q) - (\sigma_1 + \sigma_2 q))((pq)^{j-1} + \cdots + (pq)^2 + pq + 1).
\end{align}
There are two cases to consider: when $pq \in (0,1]$ and when $pq > 1$.
\medskip

\noindent{\bf Case 1: } Suppose $pq \in (0,1]$. If $pq = 1$, then \eqref{sequence} implies that
\begin{equation*}
a_{j} = a_{0} - j(\alpha(1+q) - (\sigma_1 + \sigma_2 q)) \rightarrow -\infty
\end{equation*}
and $b_j \rightarrow -\infty$ as $j\rightarrow \infty$. If $pq \in (0,1)$, then \eqref{sequence} implies that
\begin{align*}
a_j = {} & (pq)^{j}(n-\alpha) - (\alpha(1+q) - (\sigma_1 + \sigma_2 q))\frac{(pq)^j - 1}{pq-1} \\
= {} & (pq)^{j}\left(n-\alpha  -\frac{\alpha(1+q) - (\sigma_1 + \sigma_2 q)}{pq-1}\right) + \frac{\alpha(1+q) - (\sigma_1 + \sigma_2 q)}{pq-1} \\
{} & \rightarrow a := -\frac{\alpha(1+q) - (\sigma_1 + \sigma_2 q)}{1-pq} < 0 \,\text{ as }\, j\rightarrow \infty.
\end{align*}
Thus, $b_j \rightarrow a - \alpha + \sigma_2 < 0$. In any case, we can choose a sufficiently large $j_0$ so that $a_{j_0},b_{j_0} < 0$ and
\begin{align*}
u(x) \geq {} & C\ds\int_{\mathbb{R}^n\backslash B_{R}(0)} \frac{v(y)^q}{|x-y|^{n-\alpha}|y|^{\sigma_1}}\,dy
\geq C\int_{\mathbb{R}^{n}\backslash B_{R}(0)} \frac{|y|^{-q b_{j_0}}}{|x-y|^{n-\alpha}|y|^{\sigma_1}} \,dy \\
\geq {} & C\int_{R}^{\infty} r^{\alpha - \sigma - pb_{j_0}} \,\frac{dr}{r} = \infty.
\end{align*}
Thus, $u(x) = \infty$, which is impossible.
\medskip

\noindent{\bf Case 2: } Suppose $pq > 1$.

\noindent 1. Let
\begin{equation*}
n-\alpha < M := \max\Bigg\{ \frac{\alpha(q+1)-(\sigma_1 +\sigma_2 q)}{pq - 1}, \frac{\alpha(p+1)-(\sigma_2 + \sigma_1 p)}{pq - 1} \Bigg\}.
\end{equation*}
First, assume $M = \frac{\alpha(q+1)-(\sigma_1 +\sigma_2 q)}{pq - 1}$. As in Case 1, \eqref{sequence} implies
\begin{equation*}
a_j = (pq)^{j}\left(n-\alpha -\frac{\alpha(1+q) - (\sigma_1 + \sigma_2 q)}{pq-1}\right) + \frac{\alpha(1+q) - (\sigma_1 + \sigma_2 q)}{pq-1} \rightarrow -\infty
\end{equation*}
as $j \rightarrow \infty$. Thus, $u(x) = \infty$, which is impossible. On the other hand, if $pq>1$ and $M = \frac{\alpha(p+1)-(\sigma_2 +\sigma_1 p)}{pq - 1}$, then we can apply the same iteration procedure as above to obtain a contradiction. \\

\noindent 2. Lastly, we consider when $pq>1$ and $M = n-\alpha$. Without loss of generality, we assume $n-\alpha = \frac{\alpha(p+1)-(\sigma_2 +\sigma_1 p)}{pq - 1}$. We see that
\begin{equation*}
u(x) \geq \frac{1}{(R + |x|)^{n-\alpha}}\int_{B_{R}(0)} \frac{v(y)^q}{|y|^{\sigma_1}}\,dy \,\text{ and }\, v(x) \geq \frac{1}{(R + |x|)^{n-\alpha}}\int_{B_{R}(0)} \frac{u(y)^p}{|y|^{\sigma_2}}\,dy.
\end{equation*}
From this, we obtain
\begin{equation*}
\int_{B_{R}(0)} \frac{u(x)^p}{|x|^{\sigma_2}}\,dx \geq \frac{C}{R^{(n-\alpha)p +\sigma_2 - n}}\left(\int_{B_{R}(0)} \frac{v(y)^q}{|y|^{\sigma_1}}\,dy\right)^p,
\end{equation*}
and
\begin{equation*}
\int_{B_{R}(0)} \frac{v(x)^q}{|x|^{\sigma_1}}\,dx \geq \frac{C}{R^{(n-\alpha)q + \sigma_1 - n}}\left(\int_{B_{R}(0)} \frac{u(y)^p}{|y|^{\sigma_2}}\,dy\right)^q.
\end{equation*}
Then
\begin{align}\label{Lp}
\int_{B_{R}(0)} \frac{u(x)^p}{|x|^{\sigma_2}} \,dx \geq {} & \frac{C}{R^{(n-\alpha)p +\sigma_2 - n+ p(q(n-\alpha) +\sigma_1 - n)}}\left(\int_{B_{R}(0)} \frac{u(x)^p}{|x|^{\sigma_2}} \,dy \right)^{pq} \notag \\
\geq {} & C\left(\int_{B_{R}(0)} \frac{u(x)^p}{|x|^{\sigma_2}} \,dy \right)^{pq},
\end{align}
where we are using the fact that
\begin{align*}
(n-\alpha)p +\sigma_2 - n + {} & p(q(n-\alpha) +\sigma_1 - n) \\
= {} & (n-\alpha)p + \sigma_2 - n + pq(n-\alpha) + \sigma_1 p - np \\
= {} & (pq-1)(n-\alpha) - \lbrace \alpha(1+p) - (\sigma_2 + \sigma_1 p)\rbrace \\
%= {} & (pq-1)\Bigg\lbrace (n-\alpha) - \frac{\alpha(1+p) - (\sigma_2 + \sigma_1 p)}{pq-1} \Bigg\rbrace \\
= {} & (pq-1)(n-\alpha - M) \\
= {} & 0.
\end{align*}
Thus, the constant $C>0$ in \eqref{Lp} is independent of $R$. By sending $R\rightarrow \infty$ we get $|x|^{-\sigma_2}u(x)^p \in L^{1}(\mathbb{R}^n)$. By applying similar calculations used in the derivation of \eqref{Lp}, we obtain
\begin{equation*}
\int_{B_{2R}(0)\backslash B_{R}(0)} \frac{u(x)^p}{|x|^{\sigma_2}} \,dx \geq C\left(\int_{B_{R}(0)} \frac{u(x)^p}{|x|^{\sigma_2}} \,dy \right)^{pq},
\end{equation*}
where $C>0$ is independent of $R$. Sending $R\rightarrow \infty$ yields
\begin{equation*}
\int_{\mathbb{R}^n} \frac{u(x)^p}{|x|^{\sigma_2}} \,dx = 0,
\end{equation*}
which implies that $u \equiv 0$ and we arrive at a contradiction. This completes the proof of the theorem.
\end{proof}

\begin{proof}[{\bf Proof of Theorem \ref{theorem2}}]
It suffices to show the existence of positive solutions for system \eqref{whls ie} when $pq > 1$ and \eqref{criteria} holds, since Theorem \ref{non-existence} basically states that this existence result must then be sharp. We show that candidates for solutions are the radial functions
\begin{equation}\label{radial}
u(x) = \frac{1}{(1 + |x|^2)^{\theta_1}} \,\text{ and }\, v(x) = \frac{1}{(1 + |x|^2)^{\theta_2}},
\end{equation}
where
\begin{equation*}
2\theta_1 = \frac{\alpha(1+q) - (\sigma_1 + \sigma_2 q)}{pq-1} \,\text{ and }\, 2\theta_2 = \frac{\alpha(1+p) - (\sigma_2 + \sigma_1 p)}{pq-1}.
\end{equation*}
Notice that \eqref{criteria} implies
\begin{equation}\label{sandwich}
\alpha < 2\theta_1 p + \sigma_2 < n \,\text{ and }\, \alpha < 2\theta_2 q + \sigma_1 < n.
\end{equation}
Note that if $|x| < 2R$ for some suitable $R>0$, $u(x)$ and $v(x)$ are proportional to \\ $\int_{\mathbb{R}^n} \frac{v(y)^q}{|x-y|^{n-\alpha}|y|^{\sigma_1}}\,dy$ and $\int_{\mathbb{R}^n} \frac{u(y)^p}{|x-y|^{n-\alpha}|y|^{\sigma_2}}\,dy$, respectively. Thus, we only consider $|x| > 2R$.

First, it is clear that
\begin{equation*}
\int_{\mathbb{R}^n} \frac{v(y)^q}{|x-y|^{n-\alpha}|y|^{\sigma_1}}\,dy \geq \frac{C}{|x|^{n-\alpha}}\int_{B_{|x|/2}(0)} \frac{v(y)^q}{|y|^{\sigma_1}}\,dy \geq \frac{C}{(1+|x|^2)^{q\theta_{2} + \sigma_{1}/2-\alpha/2}}.
\end{equation*}
This implies
\begin{equation*}
u(x)\left( \int_{\mathbb{R}^n} \frac{v(y)^q}{|x-y|^{n-\alpha}|y|^{\sigma_1}}\,dy\right)^{-1} \leq \frac{C}{(1+|x|^2)^{\theta_{1} - q\theta_{2} - \sigma_{1}/2+\alpha/2}} \leq C,
\end{equation*}
since $\theta_{1} - q\theta_{2} - \sigma_{1}/2+\alpha/2 = 0$. On the other hand,
\begin{align*}
\int_{\mathbb{R}^n} {} & \frac{v(y)^q}{|x-y|^{n-\alpha}|y|^{\sigma_1}}\,dy \\
= {} & \left( \int_{B_{R}(0)} + \int_{B_{|x|/2}(x)} + \int_{B_{R}(0)^{C}\backslash B_{|x|/2}(x)} \right) \frac{v(y)^q}{|x-y|^{n-\alpha}|y|^{\sigma_1}}\,dy \\
:= {} & I_1 + I_2 + I_3,
\end{align*}
where
\begin{align*}
I_1 = \int_{B_{R}(0)} \frac{v(y)^q}{|x-y|^{n-\alpha}|y|^{\sigma_1}}\,dy \leq {} & \frac{C}{(1+|x|^2)^{(n-\alpha)/2}}\int_{B_{R}(0)} \frac{1}{|y|^{2q\theta_2 + \sigma_{1}}}\,dy \\
\leq {} & \frac{C}{(1+|x|^2)^{(n-\alpha)/2}};
\end{align*}
\begin{align*}
I_2 = \int_{B_{|x|/2}(x)} \frac{v(y)^q}{|x-y|^{n-\alpha}|y|^{\sigma_1}}\,dy \leq {} & \frac{C}{(1 + |x|^2)^{q\theta_2 + \sigma_{1}/2}}\int_{B_{|x|/2}(x)} \frac{1}{|x-y|^{n-\alpha}}\,dy \\
\leq {} & \frac{C}{(1 + |x|^2)^{q\theta_2 +\sigma_{1}/2 -\alpha/2}};
\end{align*}
\begin{align*}
I_3 = \int_{B_{R}(0)^{C}\backslash B_{|x|/2}(x)} \frac{v(y)^q}{|x\!-\!y|^{n-\alpha}|y|^{\sigma_1}}\,dy \leq {} & C\int_{B_{R}(0)^{C}\backslash B_{|x|/2}(x)} \frac{1}{|x\!-\!y|^{n-\alpha}|y|^{2q\theta_2 + \sigma_1}} \\
\leq {} & C\int_{|x|/2}^{\infty} r^{n-(n-\alpha + 2q\theta_2 + \sigma_1)}\, \frac{dr}{r} \\
\leq {} & \frac{C}{(1 + |x|^2)^{q\theta_2 +\sigma_{1}/2 -\alpha/2}}.
\end{align*}
These estimates imply
\begin{align*}
\int_{\mathbb{R}^n} {} & \frac{v(y)^q}{|x-y|^{n-\alpha}|y|^{\sigma_1}}\,dy \leq C\Bigg\lbrace \frac{1}{(1 + |x|^2)^{q\theta_2 + \sigma_{1}/2 -\alpha/2}} + \frac{1}{(1+|x|^2)^{(n-\alpha)/2}} \Bigg\rbrace \\
\leq {} & C\Bigg\lbrace \frac{1}{(1 + |x|^2)^{q\theta_2 + \sigma_{1}/2 -\alpha/2 - \theta_1}} + \frac{1}{(1+|x|^2)^{(n-\alpha)/2 -\theta_1}}\Bigg\rbrace\frac{1}{(1+|x|^{2})^{\theta_1}} \\
\leq {} & Cu(x),
\end{align*}
since  \eqref{criteria} implies $(n-\alpha)/2 - \theta_1 > 0$.
Therefore,
\begin{equation*}
\frac{1}{C}\int_{\mathbb{R}^n} \frac{v(y)^q}{|x-y|^{n-\alpha}|y|^{\sigma_1}}\,dy \leq u(x) \leq C\int_{\mathbb{R}^n} \frac{v(y)^q}{|x-y|^{n-\alpha}|y|^{\sigma_1}}\,dy
\end{equation*}
for some suitable constant $C>0$. Hence,
\begin{equation*}
u(x) = c_{1}(x)\int_{\mathbb{R}^n} \frac{v(y)^q}{|x-y|^{n-\alpha}|y|^{\sigma_1}}\,dy
\end{equation*}
for some double bounded function $c_{1}(x)$. Similar calculations show
\begin{equation*}
v(x) = c_{2}(x)\int_{\mathbb{R}^n} \frac{u(y)^p}{|x-y|^{n-\alpha}|y|^{\sigma_2}}\,dy
\end{equation*}
for some double bounded function $c_{2}(x)$. This completes the proof of the theorem.
\end{proof}

\begin{remark}
Instead of solutions with the slow rates, we can also find solutions of the form \eqref{radial} with the fast rate $2\theta_{1} = 2\theta_{2} = n-\alpha$ provided the following stronger conditions hold: $p>\frac{n-\sigma_2}{n-\alpha}$ and $q> \frac{n-\sigma_1}{n-\alpha}$. The proof of this involves similar calculations as before and so we omit the details.
\end{remark}

\begin{proof}[{\bf Proof of Corollary \ref{corollary1}}]
This follows immediately from Theorem \ref{theorem2} provided \eqref{whls ie} and \eqref{whls pde} are equivalent in the following sense.

\medskip

\noindent{\bf Equivalence:} From Theorem \ref{theorem1}, the classical solution of \eqref{whls pde} satisfies
\begin{equation*}
  \left\{\begin{array}{cl}
    u(x) = a_{1}\ds\int_{\mathbb{R}^{n}} \frac{c_{1}(y)v(y)^q}{|x-y|^{n-\alpha}|y|^{\sigma_1}}\,dy, \\
    v(x) = a_{2}\ds\int_{\mathbb{R}^{n}} \frac{c_{2}(y)u(y)^p}{|x-y|^{n-\alpha}|y|^{\sigma_2}}\,dy, \\
  \end{array}
\right.
\end{equation*}
for some positive constants $a_1$ and $a_2$. Clearly,
\begin{equation*}
\frac{1}{C_1}\int_{\mathbb{R}^{n}} \frac{v(y)^q}{|x-y|^{n-\alpha}|y|^{\sigma_1}}\,dy \leq u(x) \leq C_{1}\int_{\mathbb{R}^{n}} \frac{v(y)^q}{|x-y|^{n-\alpha}|y|^{\sigma_1}}\,dy,
\end{equation*}
and
\begin{equation*}
\frac{1}{C_2}\int_{\mathbb{R}^{n}} \frac{u(y)^p}{|x-y|^{n-\alpha}|y|^{\sigma_2}}\,dy \leq v(x) \leq C_{2}\int_{\mathbb{R}^{n}} \frac{u(y)^p}{|x-y|^{n-\alpha}|y|^{\sigma_2}}\,dy,
\end{equation*}
since $c_{1}(x)$ and $c_{2}(x)$ are double bounded. Thus,
\begin{equation}\label{inint}
  \left\{\begin{array}{cl}
    u(x) = a_{1}(x)\ds\int_{\mathbb{R}^{n}} \frac{v(y)^q}{|x-y|^{n-\alpha}|y|^{\sigma_1}}\,dy, \\
    v(x) = a_{2}(x)\ds\int_{\mathbb{R}^{n}} \frac{u(y)^p}{|x-y|^{n-\alpha}|y|^{\sigma_2}}\,dy, \\
  \end{array}
\right.
\end{equation}
for the double bounded functions
\begin{equation*}
a_{1}(x) = u(x)\Bigg[ \ds\int_{\mathbb{R}^{n}} \frac{v(y)^q}{|x-y|^{n-\alpha}|y|^{\sigma_1}}\,dy \Bigg]^{-1}
\end{equation*}
and
\begin{equation*}
a_{2}(x) = v(x)\Bigg[ \int_{\mathbb{R}^{n}} \frac{v(y)^p}{|x-y|^{n-\alpha}|y|^{\sigma_2}}\,dy \Bigg]^{-1}.
\end{equation*}

On the contrary, suppose $u$ and $v$ solve \eqref{inint} for some double bounded functions $a_{1}(x)$ and $a_{2}(x)$. Set $w_{1}(x) = u(x)/a_{1}(x)$ and $w_{2}(x) = v(x)/a_{2}(x)$. Then for any pair of constants $\tilde{c}_{1}$ and $\tilde{c}_{2}$, $(w_{1},w_{2})$ satisfies
\begin{equation*}
  \left\{\begin{array}{cl}
    w_{1}(x) = \tilde{c}_{1}\ds\int_{\mathbb{R}^{n}} \frac{c_{1}(y)w_{2}(y)^q}{|x-y|^{n-\alpha}|y|^{\sigma_1}}\,dy, \\
    w_{2}(x) = \tilde{c}_{2}\ds\int_{\mathbb{R}^{n}} \frac{c_{2}(y)w_{1}(y)^p}{|x-y|^{n-\alpha}|y|^{\sigma_2}}\,dy, \\
  \end{array}
\right.
\end{equation*}
where $c_{1}(x) = a_{2}(x)^p/\tilde{c}_1$ and $c_{2}(x) = a_{1}(x)^p/\tilde{c}_2$. Hence, by the equivalence result of Theorem \ref{theorem1}, we can choose $\tilde{c}_1$ and $\tilde{c}_2$ such that
\begin{equation*}
  \left\{\begin{array}{l}
    (-\Delta)^{\alpha/2} w_{1}(x) = \displaystyle c_{1}(x)\frac{w_{2}(x)^q}{|x|^{\sigma_1}}, \\
    (-\Delta)^{\alpha/2} w_{2}(x) = \displaystyle c_{2}(x)\frac{w_{1}(x)^p}{|x|^{\sigma_2}},
  \end{array}
\right.
\end{equation*}
for double bounded functions $c_{1}(x)$ and $c_{2}(x)$. This completes the proof of the corollary.
\end{proof}

\section{Proof of Theorems \ref{theorem4} and \ref{theorem5}}\label{section4}
\begin{remark} Hereafter, we deliberately neglect the case when $p \in (0,1]$ for \eqref{hs eq} in Theorem \ref{theorem4} and when $pq \in (0,1]$ for \eqref{hs sys} in Theorem \ref{theorem5}, since the non-existence of positive (radial or non-radial) solutions for these follows from Theorem \ref{non-existence} and its scalar counterpart.
\end{remark}
As discussed earlier, these two Liouville type theorems stem from two essential ingredients: the decay estimates for radial solutions and integral forms of Pohozaev type identities. The following lemma addresses the former ingredient.
\begin{lemma}[Decay of radial solutions]\label{decay of u}
Suppose $u=u(r)~(r=|x|)$ is a positive radial solution of \eqref{hs eq}, then there holds the decay property $$u(r) \leq C r^{-\frac{\alpha-\sigma}{p-1}}, \,~\, r > 0.$$
\end{lemma}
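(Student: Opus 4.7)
The plan is to exploit the fact that positive radial solutions of \eqref{hs eq} are assumed (per the introductory remarks preceding Section \ref{section4}) to be decreasing in $r$, and to derive the decay bound by simply throwing away part of the integral representation and comparing against the value $u(r)$ itself. Since $u$ is decreasing, $u(y) \geq u(r)$ whenever $|y| \leq r = |x|$, so restricting the integration to the ball $B_r(0)$ yields a tractable lower bound for $u(r)$ in terms of $u(r)^p$.

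First I would fix $x \in \mathbb{R}^n$ with $|x| = r > 0$ and write
\[
u(r) = \int_{\mathbb{R}^n} \frac{u(y)^p}{|x-y|^{n-\alpha}|y|^{\sigma}}\,dy \geq \int_{B_r(0)} \frac{u(y)^p}{|x-y|^{n-\alpha}|y|^{\sigma}}\,dy \geq u(r)^{p}\int_{B_r(0)} \frac{dy}{|x-y|^{n-\alpha}|y|^{\sigma}}.
\]
Next, for $y \in B_r(0)$ the triangle inequality gives $|x-y| \leq |x|+|y| \leq 2r$, so $|x-y|^{-(n-\alpha)} \geq (2r)^{-(n-\alpha)}$. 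Since $\sigma < \alpha < n$, a direct computation gives $\int_{B_r(0)} |y|^{-\sigma}\,dy = c(n,\sigma)\, r^{n-\sigma}$. Combining these two observations yields
\[
u(r) \geq \frac{c(n,\sigma)}{(2r)^{n-\alpha}}\, u(r)^{p}\, r^{n-\sigma} = C\, u(r)^{p}\, r^{\alpha-\sigma}.
\]
Dividing through by $u(r) > 0$ then gives $u(r)^{p-1} \leq C\, r^{-(\alpha-\sigma)}$, which is precisely the claimed bound.

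The only conceptual subtlety is the use of monotonicity of $u(r)$ in $r$, which is part of the standing hypothesis on the class of solutions considered in Theorems \ref{theorem4} and \ref{theorem5}; no additional argument is required here. The rest is essentially a one-line comparison, so I do not anticipate a serious obstacle — the most delicate point is merely making sure the ball integral $\int_{B_r(0)}|y|^{-\sigma}\,dy$ is finite, which is secured by $\sigma < \alpha < n$ (noting that $\sigma$ may be negative, in which case the integrand is bounded near the origin and integrability is trivial on bounded sets).
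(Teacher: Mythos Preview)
Your proof is correct and is essentially the same as the paper's: restrict the integral to $B_r(0)$, use the decreasing property to pull out $u(r)^p$, bound $|x-y|\leq 2r$ on this ball, and evaluate $\int_{B_r(0)}|y|^{-\sigma}\,dy$ to obtain $u(r)\geq C r^{\alpha-\sigma}u(r)^p$, then solve for $u(r)$ using $p>1$. The paper compresses these steps into a single line but the argument is identical.
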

\begin{proof}
From the integral equation and the decreasing property of radial solutions, we have
\begin{equation*}
u(r) \geq \int_{B_{r}(0)} \frac{u(y)^p}{|x-y|^{n-\alpha}|y|^{\sigma}}\,dy \geq C r^{n-(n-\alpha +\sigma)}u(r)^{p}
\geq Cr^{\alpha-\sigma}u(r)^p.
\end{equation*}
The result follows by directly solving for $u$ in this inequality.
\end{proof}
As a result of the decay properties of radial solutions, we have the following.
\begin{lemma}\label{finite energy 1}
If $u$ is a positive radial solution of \eqref{hs sys}, then the integrals
\begin{equation*}
\int_{\mathbb{R}^n} \frac{u(x)^{p+1}}{|x|^{\sigma}}\,dx \,\text{ and }\, \int_{\mathbb{R}^n} \frac{u(x)^p}{|x|^{\sigma}}(x\cdot \nabla u(x))\,dx
\end{equation*}
are finite.
\end{lemma}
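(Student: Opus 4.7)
The plan is to leverage the radial decay estimate of Lemma~\ref{decay of u} together with the subcritical condition \eqref{subcritical1} to control behavior at infinity, and use the continuity of $u$ combined with $\sigma<\alpha<n$ to handle the behavior near the origin.

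For the first integral $\int_{\mathbb{R}^n} u^{p+1}/|x|^{\sigma}\,dx$, I would split $\mathbb{R}^n$ into $B_1(0)$ and its complement. On $B_1(0)$, continuity of $u\in C(\mathbb{R}^n)$ bounds $u^{p+1}$, while $\int_{B_1(0)} |x|^{-\sigma}\,dx$ converges because $\sigma<n$. On the exterior, Lemma~\ref{decay of u} dominates the integrand by $Cr^{-(p+1)(\alpha-\sigma)/(p-1)-\sigma}$. In spherical coordinates, integrability on $[1,\infty)$ reduces to
\[
\frac{(p+1)(\alpha-\sigma)}{p-1}+\sigma>n,
\]
which after clearing denominators is exactly the subcritical condition \eqref{subcritical1}, namely $p(n-\alpha)<n+\alpha-2\sigma$.

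For the second integral, radial symmetry gives $x\cdot\nabla u=ru'(r)$, so up to the constant $\omega_{n-1}$ the integrand takes the form $u^p u' r^{n-\sigma}$. Since $u$ is radially decreasing by hypothesis, $u^p u'\leq 0$, and the integrand has a constant sign, so absolute convergence is equivalent to convergence of the signed integral. Writing $u^p u'=(u^{p+1})'/(p+1)$ and integrating by parts over $[0,R]$ yields
\[
\int_0^R u^p u' r^{n-\sigma}\,dr=\frac{u(R)^{p+1}R^{n-\sigma}}{p+1}-\frac{n-\sigma}{p+1}\int_0^R u^{p+1} r^{n-\sigma-1}\,dr.
\]
The boundary contribution at $r=0$ vanishes because $n>\sigma$ and $u(0)<\infty$. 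The boundary term at $r=R$ is controlled by Lemma~\ref{decay of u} as
\[
u(R)^{p+1}R^{n-\sigma}\leq CR^{\,n-\sigma-(p+1)(\alpha-\sigma)/(p-1)},
\]
whose exponent is strictly negative under \eqref{subcritical1} by the same algebra used for the first integral; hence the term vanishes as $R\to\infty$. The remaining integral is a constant multiple of the first one, which we already showed is finite, so the monotone convergence theorem gives a finite absolute value for $\int_0^\infty u^p u' r^{n-\sigma}\,dr$ and thus for the second integral in the statement.

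The main delicate point is to obtain absolute rather than merely improper convergence of the second integral, especially since $u$ need only belong to $C^{\lfloor\alpha\rfloor}(\mathbb{R}^n\setminus\{0\})\cap C(\mathbb{R}^n)$ so $\nabla u$ may fail to be bounded near the origin. This is resolved cleanly by the radially decreasing hypothesis, which makes $u^p u'r^{n-\sigma}$ sign-definite and therefore collapses absolute integrability to the one-sided integration-by-parts identity above.
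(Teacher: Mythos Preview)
Your proof is correct. For the first integral it coincides with the paper's argument: split at a fixed radius, use continuity of $u$ together with $\sigma<n$ near the origin, and the decay estimate of Lemma~\ref{decay of u} plus the subcritical condition \eqref{subcritical1} at infinity.

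For the second integral you take a different route. The paper controls $|A_2|$ pointwise by invoking the standard identity \eqref{standard id} (Mitidieri's estimate for radial superharmonic functions), which gives $|ru'(r)|\leq (n-2)u(r)$ and thereby reduces the integrand to $Cu(r)^{p+1}r^{n-\sigma-1}$, the same as in $A_1$. You instead exploit the sign-definiteness coming from the decreasing hypothesis and integrate by parts in $r$, reducing the second integral to a vanishing boundary term plus a multiple of the first. Your approach is more self-contained: it does not require knowing that $-\Delta u\geq 0$ for solutions of the integral equation (which is what underlies \eqref{standard id}), and it handles the possible singularity of $\nabla u$ at the origin more carefully than the paper, which simply asserts the finiteness of the ball integral from $\sigma<n$. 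The paper's approach, on the other hand, yields a direct pointwise bound $|ru'(r)|\leq Cu(r)$ that is reusable elsewhere. Your integration-by-parts identity is in fact exactly the computation carried out later in the proof of Theorem~\ref{theorem4}, so you are effectively previewing that step.
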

\begin{proof}
Since $\sigma < n$, the integrals $\int_{B_{R}(0)} \frac{u(x)^{p+1}}{|x|^{\sigma}}\,dx \,\text{ and }\, \int_{B_{R}(0)} \frac{u(x)^p}{|x|^{\sigma}}(x\cdot \nabla u(x))\,dx$ are finite for each $R>0$. Thus, it suffices to show that the following integrals converge:
\begin{equation*}
A_{1}:= \int_{B_{R}(0)^{C}} \frac{u(x)^{p+1}}{|x|^{\sigma}}\,dx \,\text{ and }\, A_{2}:= \int_{B_{R}(0)^{C}} \frac{u(x)^p}{|x|^{\sigma}}(x\cdot \nabla u(x))\,dx.
\end{equation*}
From Lemma \ref{decay of u} and the standard identity we invoked in \eqref{standard id}, we obtain
\begin{align*}
|A_{1}| + |A_{2}| \leq {} & C\int_{R}^{\infty} u(r)^{p}|ru'(r)| r^{n-\sigma}\frac{dr}{r} + C\int_{R}^{\infty} u(r)^{p+1} r^{n-\sigma}\,\frac{dr}{r} \\
\leq {} & C\int_{R}^{\infty} u(r)^{p+1} r^{n-\sigma}\frac{dr}{r} \leq C \int_{R}^{\infty} r^{n-\sigma - (\alpha - \sigma)\frac{p+1}{p-1}} \,\frac{dr}{r},
\end{align*}
where the improper integrals are convergent since $1 < p < \frac{n + \alpha -2\sigma}{n-\alpha}$ implies
\begin{equation*}
n-\sigma - \frac{(\alpha-\sigma)(p+1)}{p-1} = \frac{p(n-\alpha) - (n+\alpha -2\sigma)}{p-1} < 0.
\end{equation*}
\end{proof}

\begin{proof}[\bf Proof of Theorem \ref{theorem4}]
Assume that $u$ is a positive radial solution of the integral equation. We rewrite the integral equation as
\begin{equation*}
u(\lambda x) = \int_{\mathbb{R}^n} \frac{u(y)^{p}}{|\lambda x - y|^{n-\alpha}|y|^{\sigma}}\,dy = \lambda^{\alpha - \sigma}\int_{\mathbb{R}^n} \frac{u(\lambda z)^p}{|x-z|^{n-\alpha}|z|^{\sigma}}\,dz.
\end{equation*}
Differentiating this identity with respect to $\lambda$ on both sides yields
\begin{equation*}
x\cdot \nabla u(\lambda x) = (\alpha - \sigma)\lambda^{\alpha - \sigma -1}\int_{\mathbb{R}^n}\frac{u(\lambda z)^p}{|x-z|^{n-\alpha}|z|^{\sigma}}\,dz + \lambda^{\alpha}\int_{\mathbb{R}^n} \frac{p u(\lambda z)^{p-1}(z\cdot \nabla u)}{|x-z|^{n-\alpha}|z|^{\sigma}}\,dz.
\end{equation*}
Set $\lambda = 1$ to get
\begin{align}\label{xdotwithu}
x\cdot \nabla u(x) = {} & (\alpha - \sigma)\int_{\mathbb{R}^n}\frac{u(z)^p}{|x-z|^{n-\alpha}|z|^{\sigma}}\,dz + \int_{\mathbb{R}^n} \frac{p u(z)^{p-1}(z\cdot \nabla u)}{|x-z|^{n-\alpha}|z|^{\sigma}}\,dz \notag \\
= {} & (\alpha - \sigma)u(x) + \int_{\mathbb{R}^n} \frac{z\cdot \nabla u(z)^{p}}{|x-z|^{n-\alpha}|z|^{\sigma}}\,dz.
\end{align}
To handle the last term in \eqref{xdotwithu}, an integration by parts yields
\begin{align*}
\int_{B_{R}(0)} {} & \frac{z\cdot \nabla u(z)^{p}}{|x-z|^{n-\alpha}|z|^{\sigma}}\,dz = R\int_{\partial B_{R}(0)} \frac{u(z)^{p}}{|x-z|^{n-\alpha}|z|^{\sigma}} \,ds \\
{} & - (n-\sigma)\int_{B_{R}(0)} \frac{u(z)^{p}}{|x-z|^{n-\alpha}|z|^{\sigma}}\,dz - (n-\alpha)\int_{B_{R}(0)} \frac{(z\cdot (x-z))u(z)^{p}}{|x-z|^{n-\alpha + 2}|z|^{\sigma}}\,dz,
\end{align*}
where the boundary integral vanishes as $R\rightarrow \infty$ since $\int_{\mathbb{R}^n} \frac{u(z)^{p}}{|x-z|^{n-\alpha}|z|^{\sigma}} \,dz < \infty.$ With this, we obtain the identity
\begin{equation*}
x\cdot \nabla u(x) = -(n-\alpha)u(x) + (n-\alpha)\int_{\mathbb{R}^n} \frac{(z\cdot (x-z))u(z)^{p}}{|x-z|^{n-\alpha + 2}|z|^{\sigma}}\,dz.
\end{equation*}
If we multiply this by $\frac{u(x)^p}{|x|^{\sigma}}$ and integrate over $\mathbb{R}^n$ we get
\begin{align}\label{int on ball}
\int_{\mathbb{R}^n} {} & \frac{u(x)^p}{|x|^{\sigma}}(x\cdot \nabla u(x))\,dx \notag \\
= {} & -(n-\alpha)\int_{\mathbb{R}^n}\frac{u(x)^{p+1}}{|x|^{\sigma}}\,dx - (n-\alpha)\int_{\mathbb{R}^n}\int_{\mathbb{R}^n} \frac{(z\cdot (x-z))u(x)^{p}u(z)^{p}}{|x-z|^{n-\alpha + 2}|x|^{\sigma}|z|^{\sigma}}\,dz dx.
\end{align}
Noticing that $z\cdot(x-z) + x\cdot(z-x) = -|x-z|^2$, we have
\begin{equation*}
\int_{\mathbb{R}^n}\int_{\mathbb{R}^n} \frac{(z\cdot (x-z))u(x)^{p}u(z)^{p}}{|x-z|^{n-\alpha + 2}|x|^{\sigma}|z|^{\sigma}}\,dz dx
= -\frac{1}{2}\int_{\mathbb{R}^n}\int_{\mathbb{R}^n} \frac{u(x)^{p}u(z)^{p}}{|x-z|^{n-\alpha}|x|^{\sigma}|z|^{\sigma}}\,dz dx.
\end{equation*}
Thus,
\begin{align}\label{pohozaev}
\int_{\mathbb{R}^n} \frac{u(x)^p}{|x|^{\sigma}} {} &  (x\cdot \nabla u(x))\,dx \notag \\
= {} & -(n\!-\!\alpha)\int_{\mathbb{R}^n} \frac{u(x)^{p+1}}{|x|^{\sigma}}\,dx\! -\! (n\!-\!\alpha)\int_{\mathbb{R}^n}\int_{\mathbb{R}^n} \frac{(z\cdot (x\!-\!z)) u(x)^{p}u(z)^{p}}{|x\!-\!z|^{n-\alpha + 2}|x|^{\sigma}|z|^{\sigma}}\,dz dx \notag \\
= {} & -(n-\alpha)\int_{\mathbb{R}^n} \frac{u(x)^{p+1}}{|x|^{\sigma}}\,dx + \frac{n-\alpha}{2}\int_{\mathbb{R}^n} \frac{u(x)^{p+1}}{|x|^{\sigma}}\,dx \notag \\
= {} & -\frac{n-\alpha}{2}\int_{\mathbb{R}^n}\frac{u(x)^{p+1}}{|x|^{\sigma}}\,dx.
\end{align}
In view of Lemma \ref{finite energy 1}, integration by parts yields
\begin{align*}
\int_{B_{R}(0)} \frac{u(x)^p}{|x|^{\sigma}} (x\cdot \nabla u(x))\,dx = {} & \frac{1}{1+p}\int_{B_{R}(0)} \frac{x}{|x|^{\sigma}} \cdot \nabla (u(x)^{p+1})\,dx \notag \\
= {} &\! -\!\frac{(n\!-\!\sigma)}{1\!+\!p}\int_{B_{R}(0)} \frac{u(x)^{p+1}}{|x|^{\sigma}}\,dx
\!+\! \underbrace{R\int_{\partial B_{R}(0)} \frac{u(x)^{p+1}}{|x|^{\sigma}} \,ds}_{ = \mathrm{o}(1) \text{ as } R\rightarrow \infty}.
\end{align*}
Thus,
\begin{align*}
\int_{\mathbb{R}^n} \frac{u(x)^p}{|x|^{\sigma}} (x\cdot \nabla u(x))\,dx = -\frac{(n-\sigma)}{1+p}\int_{\mathbb{R}^n} \frac{u(x)^{p+1}}{|x|^{\sigma}}\,dx.
\end{align*}
Inserting this into \eqref{pohozaev} yields
\begin{equation*}
\Bigg\lbrace \frac{n-\sigma}{1+p} - \frac{n-\alpha}{2} \Bigg\rbrace \int_{\mathbb{R}^n} \frac{u(x)^{p+1}}{|x|^{\sigma}}\,dx
= 0,
\end{equation*}
but this contradicts with \eqref{subcritical1}. This completes the proof of the theorem.
\end{proof}
%%%%%%%%%%%%%%%%%%%%%%%%%%%%%%%%%%%%%%%%%%%%%%%%%%%%%%%%%%%%%%%%%%%%%%%%%%%%%%%%%%%%%%%%%%%%%%%%%%%%%%%%%%%%%%%%%%%%%%%%%%%%%%%%%%%%%%%%
The proof of Theorem \ref{theorem5} is similar to the proof of Theorem \ref{theorem4}. First, we need the following lemma on the decay estimates for radial solutions.
\begin{lemma}\label{decay of uv}
Suppose $(u,v)=(u(r),v(r))$ is a positive radial solution pair of \eqref{hs sys}, then
\begin{equation*}
u(r) \leq C r^{-\frac{\alpha(1+q) - (\sigma_1 + \sigma_{2}q)}{pq-1}} \,\text{ and }\,~ v(r) \leq C r^{-\frac{\alpha(1+p) - (\sigma_2 + \sigma_{1}p)}{pq-1}}, \,~\, r > 0.
\end{equation*}
\end{lemma}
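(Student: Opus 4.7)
The plan is to mirror Lemma~\ref{decay of u} by bounding each integral from below via restriction to the ball $B_r(0)$, invoking the monotonicity of radial solutions, and then decoupling the resulting pair of lower bounds. For $r=|x|>0$ and $|y|\leq r$, the triangle inequality gives $|x-y|\leq 2r$, while the radial monotonicity of $v$ yields $v(y)^q\geq v(r)^q$. Since $\sigma_1<\alpha<n$, the integral $\int_{B_r(0)}|y|^{-\sigma_1}\,dy$ converges and is of order $r^{n-\sigma_1}$, so
\begin{equation*}
u(r)\geq \frac{v(r)^q}{(2r)^{n-\alpha}}\int_{B_r(0)}\frac{dy}{|y|^{\sigma_1}}\geq C\,r^{\alpha-\sigma_1}v(r)^q,
\end{equation*}
and an identical argument applied to the second equation of \eqref{hs sys} gives $v(r)\geq C\,r^{\alpha-\sigma_2}u(r)^p$.

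Substituting the second estimate into the first, and performing the symmetric substitution for $v$, decouples the system into two independent algebraic inequalities:
\begin{equation*}
u(r)\geq C\,r^{\alpha(1+q)-(\sigma_1+\sigma_2 q)}u(r)^{pq}, \qquad v(r)\geq C\,r^{\alpha(1+p)-(\sigma_2+\sigma_1 p)}v(r)^{pq}.
\end{equation*}
Since $pq>1$, solving each inequality for the corresponding unknown delivers exactly the stated decay bounds
\begin{equation*}
u(r)\leq C\,r^{-\frac{\alpha(1+q)-(\sigma_1+\sigma_2 q)}{pq-1}},\qquad v(r)\leq C\,r^{-\frac{\alpha(1+p)-(\sigma_2+\sigma_1 p)}{pq-1}}.
\end{equation*}

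The principal subtlety is the tacit appeal to radial monotonicity in the step $v(y)^q\geq v(r)^q$ for $|y|\leq r$. The same monotonicity was used implicitly in the proof of Lemma~\ref{decay of u}; in the present coupled setting it follows from a standard moving-plane or Kelvin transform argument centered at the origin, made possible by the rotational invariance of both the Riesz kernel and the weights $|y|^{-\sigma_i}$. Once radial monotonicity is taken for granted, the rest of the proof is the elementary two-line iteration above.
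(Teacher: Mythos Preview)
Your proof is correct and follows essentially the same approach as the paper: restrict each integral to $B_r(0)$, use monotonicity and $|x-y|\leq 2r$ to obtain $u(r)\geq Cr^{\alpha-\sigma_1}v(r)^q$ and $v(r)\geq Cr^{\alpha-\sigma_2}u(r)^p$, then substitute and solve using $pq>1$. One minor point: your closing paragraph on moving planes is unnecessary, since the paper explicitly treats radially \emph{decreasing} solutions (see the paragraph preceding Theorem~\ref{theorem4}), so monotonicity is part of the standing hypothesis rather than something to be derived.
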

\begin{proof}
From the integral equations and the monotonicity of radial solutions, we obtain
\begin{align*}
u(r) \geq {} & \int_{B_{r}(0)} \frac{v(y)^q}{|x-y|^{n-\alpha}|y|^{\sigma_1}}\,dy \geq Cr^{n-(n-\alpha + \sigma_1)}v(r)^{q}, \\
v(r) \geq {} & \int_{B_{r}(0)} \frac{u(y)^q}{|x-y|^{n-\alpha}|y|^{\sigma_2}}\,dy \geq Cr^{n-(n-\alpha + \sigma_2)}u(r)^{p}.
\end{align*}
Thus, $u(r) \geq C r^{\alpha - \sigma_{1}}v(r)^q$ and $v(r) \geq C r^{\alpha - \sigma_{2}}u(r)^p$, and the result follows by solving for $u$ and $v$ in the estimates.
\end{proof}

As a result of the decay property of radial solutions, we obtain the following.
\begin{lemma}\label{finite energy 2}
Let $(u,v)$ be a positive radial solution of \eqref{hs sys}. Then the following integrals
\begin{align*}
&\int_{\mathbb{R}^n} \frac{u(x)^{p}}{|x|^{\sigma_2}}(x\cdot \nabla u(x)) \,dx,\,~\, \int_{\mathbb{R}^n} \frac{u(x)^{p+1}}{|x|^{\sigma_2}} \,dx, \\
&\int_{\mathbb{R}^n} \frac{v(x)^{q}}{|x|^{\sigma_1}}(x\cdot \nabla v(x)) \,dx, \,\text{ and }\, \int_{\mathbb{R}^n} \frac{v(x)^{q+1}}{|x|^{\sigma_1}} \,dx,
\end{align*}
are finite.
\end{lemma}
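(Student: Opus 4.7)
The plan is to mirror the proof of Lemma \ref{finite energy 1}, splitting each integral into a piece over $B_R(0)$ (for some fixed $R>0$) and a tail over $B_R(0)^c$. Since $\sigma_1,\sigma_2 < \alpha < n$ and $u,v$ are continuous, the local pieces
$$\int_{B_R(0)} \frac{u^{p+1}}{|x|^{\sigma_2}}\,dx,\quad \int_{B_R(0)} \frac{u^p}{|x|^{\sigma_2}}(x\cdot\nabla u)\,dx,\quad \int_{B_R(0)} \frac{v^{q+1}}{|x|^{\sigma_1}}\,dx,\quad \int_{B_R(0)} \frac{v^q}{|x|^{\sigma_1}}(x\cdot \nabla v)\,dx$$
are finite by elementary estimates. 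So everything reduces to controlling the four tail integrals.

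For the tails, I would first apply the standard identity (Lemma 3.1 in \cite{Mitidieri93}) used in \eqref{standard id} to the radial decreasing functions $u,v$, which yields $|x\cdot\nabla u(x)|\leq (n-2)u(x)$ and $|x\cdot\nabla v(x)|\leq (n-2)v(x)$. Hence it suffices to bound
$$\int_{B_R(0)^c}\frac{u^{p+1}}{|x|^{\sigma_2}}\,dx\quad\text{and}\quad \int_{B_R(0)^c}\frac{v^{q+1}}{|x|^{\sigma_1}}\,dx.$$
Next I would plug in the radial decay estimates of Lemma \ref{decay of uv}, writing $2\theta_1=\frac{\alpha(1+q)-(\sigma_1+\sigma_2 q)}{pq-1}$ and $2\theta_2=\frac{\alpha(1+p)-(\sigma_2+\sigma_1 p)}{pq-1}$, so that passing to polar coordinates reduces finiteness of the tails to convergence of the improper integrals
$$\int_R^\infty r^{n-\sigma_2-2\theta_1(p+1)}\,\frac{dr}{r}\qquad\text{and}\qquad \int_R^\infty r^{n-\sigma_1-2\theta_2(q+1)}\,\frac{dr}{r}.$$

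The main (and really only non-routine) step is then the algebraic verification that the subcritical hypothesis \eqref{subcritical2} forces both exponents to be strictly negative. A direct computation shows
$$n-\sigma_2-2\theta_1(p+1) \;=\; \frac{(p+1)\bigl[n(pq-1)+(p+1)\sigma_1+(q+1)\sigma_2 - \alpha(1+p)(1+q)\bigr]}{(pq-1)(p+1)},$$
up to rearrangement, and on the other hand multiplying $\frac{n-\sigma_1}{1+q}+\frac{n-\sigma_2}{1+p}>n-\alpha$ through by $(1+p)(1+q)$ and simplifying yields precisely
$$\alpha(1+p)(1+q)\;>\;n(pq-1)+(p+1)\sigma_1+(q+1)\sigma_2,$$
so the bracketed quantity is negative. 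The symmetric computation $p\leftrightarrow q$, $\sigma_1\leftrightarrow\sigma_2$ handles the $v$-tail. The anticipated obstacle is purely bookkeeping: matching the two exponent conditions to the single form \eqref{subcritical2} without sign errors. Once this is done, all four integrals are finite and the lemma follows.
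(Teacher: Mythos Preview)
Your proposal is correct and follows essentially the same approach as the paper: reduce to the tail integrals, control $|x\cdot\nabla u|$ by $u$ via the standard identity \eqref{standard id}, insert the decay from Lemma \ref{decay of uv}, and check the resulting exponent is negative under \eqref{subcritical2}. The only cosmetic difference is in the algebra: the paper rewrites the exponent as $n-\alpha-(2\theta_1+2\theta_2)$ and invokes the equivalent form $2\theta_1+2\theta_2>n-\alpha$ of \eqref{subcritical2}, whereas you expand both sides directly---but these are the same computation.
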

\begin{proof}
As in the proof of Lemma \ref{finite energy 1}, it suffices to show the following integrals converge for any finite $R>0$:
\begin{enumerate}[(a)]
\item $B_1 := \ds\int_{B_{R}(0)^{C}} \frac{u(x)^{p+1}}{|x|^{\sigma_2}} \,dx$,
\item $B_2 := \ds\int_{B_{R}(0)^{C}} \frac{v(x)^{q+1}}{|x|^{\sigma_1}} \,dx$,
\item $B_3 := \ds\int_{B_{R}(0)^{C}} \frac{u(x)^{p}}{|x|^{\sigma_2}}(x\cdot \nabla u(x)) \,dx$,
\item $B_4 := \ds\int_{B_{R}(0)^{C}} \frac{v(x)^{q}}{|x|^{\sigma_1}}(x\cdot \nabla v(x)) \,dx$.
\end{enumerate}

From Lemma \ref{decay of uv} and the standard identity we used in \eqref{standard id}, we obtain
\begin{align*}
|B_{1}| + |B_{3}| \leq {} & C\int_{R}^{\infty} r^{n-\sigma_2}u(r)^{p}|ru'(r)|\,\frac{dr}{r} + C\int_{R}^{\infty} r^{n-\sigma_2}u(r)^{p+1} \,\frac{dr}{r} \\
\leq {} & C\int_{R}^{\infty} r^{n-\sigma_2}u(r)^{p+1}\,\frac{dr}{r} \leq C\int_{R}^{\infty} r^{n-\sigma_2 - \frac{\alpha(1+q) - (\sigma_1 + \sigma_{2}q)}{pq-1}(p+1)} \,\frac{dr}{r} \\
\leq {} & C\int_{R}^{\infty} r^{n-\alpha - \frac{\alpha(1+q) - (\sigma_1 + \sigma_{2}q) + \alpha(1+p) - (\sigma_2 + \sigma_{1}p)}{pq-1}}\,\frac{dr}{r} < \infty,
\end{align*}
since the subcritical condition \eqref{subcritical2} is equivalent to
\begin{equation*}
\frac{\alpha(1+q) - (\sigma_1 + \sigma_{2}q) + \alpha(1+p) - (\sigma_2 + \sigma_{1}p)}{pq-1} > n-\alpha.
\end{equation*}
This proves the convergence of the integrals in parts (a) and (c). Parts (b) and (d) follow similarly.
\end{proof}

\begin{proof}[\bf Proof of Theorem \ref{theorem5}]
On the contrary, assume $(u,v)$ is a positive radial solution pair.
As before, we obtain %differentiating this identity with respect to $\lambda$, setting $\lambda = 1$, then applying an integration by parts yield
\begin{equation*}
x\cdot \nabla u(x) = \frac{d}{d\lambda}u(\lambda x)\Big|_{\lambda = 1} = -(n - \alpha)u(x) - (n-\alpha)\int_{\mathbb{R}^n} \frac{(z\cdot (x-z))v(z)^{q}}{|x-z|^{n-\alpha + 2}|z|^{\sigma_1}}\,dz.
\end{equation*}
If we multiply this by $|x|^{-\sigma_2}u(x)^{p}$ and integrate over $\mathbb{R}^n$, we get
\begin{align}\label{int u final}
\int_{\mathbb{R}^n} \frac{u(x)^{p}}{|x|^{\sigma_2}}(x\cdot \nabla u(x))\,dx
= {} & -(n-\alpha)\int_{\mathbb{R}^n} \frac{u(x)^{p+1}}{|x|^{\sigma_2}}\,dx \notag \\
{} & - (n-\alpha)\int_{\mathbb{R}^n}\int_{\mathbb{R}^n} \frac{(z\cdot (x-z))u(x)^{p}v(z)^{q}}{|x-z|^{n-\alpha + 2}|x|^{\sigma_2}|z|^{\sigma_1}}\,dz dx.
\end{align}
Analogous calculations on the second integral equation will lead to
\begin{align}\label{int v final}
\int_{\mathbb{R}^n} \frac{v(x)^{q}}{|x|^{\sigma_1}}(x\cdot \nabla v(x))\,dx
= {} & -(n-\alpha)\int_{\mathbb{R}^n} \frac{v(x)^{q+1}}{|x|^{\sigma_1}}\,dx \notag \\
{} & - (n-\alpha)\int_{\mathbb{R}^n}\int_{\mathbb{R}^n} \frac{(z\cdot (x-z))u(z)^{p}v(x)^{q}}{|x-z|^{n-\alpha + 2}|z|^{\sigma_2}|x|^{\sigma_1}}\,dz dx.
\end{align}
By summing \eqref{int u final} and \eqref{int v final} together and recalling that $z\cdot(x-z) + x\cdot(z-x) = -|x-z|^2$, we get
\begin{equation}\label{pohozaev sys}
\int_{\mathbb{R}^n} \frac{v(x)^{q}}{|x|^{\sigma_1}}(x\cdot \nabla v(x))\,dx + \int_{\mathbb{R}^n} \frac{u(x)^{p}}{|x|^{\sigma_2}}(x\cdot \nabla u(x))\,dx = -(n-\alpha)\int_{\mathbb{R}^n} \frac{v(x)^{q+1}}{|x|^{\sigma_1}}\,dx.
\end{equation}
In view of Lemma \ref{finite energy 2}, integration by parts implies
\begin{align*}
\int_{B_{R}(0)} {} & \frac{v(x)^q}{|x|^{\sigma_1}} (x\cdot \nabla v(x)) \,dx + \int_{B_{R}(0)} \frac{u(x)^p}{|x|^{\sigma_2}} (x\cdot \nabla u(x)) \,dx \notag \\
= {} & -\frac{n-\sigma_1}{1+q}\int_{B_{R}(0)} \frac{v(x)^{q+1}}{|x|^{\sigma_1}}\,dx - \frac{n-\sigma_2}{1+p}\int_{B_{R}(0)} \frac{u(x)^{p+1}}{|x|^{\sigma_2}}\,dx \notag \\
+ {} & \underbrace{\frac{R}{1+q}\int_{\partial B_{R}(0)} \frac{v(x)^{q+1}}{|x|^{\sigma_1}}\,ds}_{ = \mathrm{o}(1) \text{ as } R\rightarrow \infty} + \underbrace{\frac{R}{1+p}\int_{\partial B_{R}(0)} \frac{u(x)^{p+1}}{|x|^{\sigma_2}}\,ds}_{ = \mathrm{o}(1) \text{ as } R\rightarrow \infty}.
\end{align*}
Sending $R\rightarrow \infty$ yields
\begin{align}\label{intparts sys}
\int_{\mathbb{R}^n} \frac{v(x)^q}{|x|^{\sigma_1}} (x\cdot \nabla v(x)) \,dx + {} & \int_{\mathbb{R}^n} \frac{u(x)^p}{|x|^{\sigma_2}} (x\cdot \nabla u(x)) \,dx \notag \\
= {} & -\Bigg\lbrace \frac{n-\sigma_1}{1+q} + \frac{n-\sigma_2}{1+p}\Bigg\rbrace\int_{\mathbb{R}^n} \frac{v(x)^{q+1}}{|x|^{\sigma_1}}\,dx,
\end{align}
where we used the fact that
\begin{equation*}
\int_{\mathbb{R}^n} \frac{u(x)^{p+1}}{|x|^{\sigma_2}}\,dx = \int_{\mathbb{R}^n}\int_{\mathbb{R}^n} \frac{u(x)^{p}v(z)^q}{|x-z|^{n-\alpha}|x|^{\sigma_2}|z|^{\sigma_1}}\,dz dx = \int_{\mathbb{R}^n} \frac{v(x)^{q+1}}{|x|^{\sigma_1}}\,dx.
\end{equation*}
Hence, \eqref{pohozaev sys} and \eqref{intparts sys} imply that
\begin{equation*}
\Bigg\lbrace \frac{n-\sigma_1}{1+q} + \frac{n-\sigma_2}{1+p} - (n-\alpha) \Bigg\rbrace \int_{\mathbb{R}^n} \frac{v(x)^{q+1}}{|x|^{\sigma_1}}\,dx
= 0,
\end{equation*}
but this contradicts with \eqref{subcritical2}. This completes the proof of the theorem.
\end{proof}

\small
\noindent Acknowledgements: This work was completed during a visiting position at the University of Oklahoma. The author would like to thank the university and the Department of Mathematics, especially Professor R\"{u}diger Landes and Professor Meijun Zhu, for their hospitality.

%\bibliographystyle{plain}
%\footnotesize
%\bibliography{refs.bib}

\begin{thebibliography}{99}
\bibitem{BM02}
\newblock J. Busca and R. Man\'{a}sevich,
\newblock A Liouville-type theorem for Lane-Emden systems,
\newblock \emph{Indiana Univ. Math. J.}, \textbf{51} (2002), 37--51.

\bibitem{CGS89}
\newblock L. Caffarelli, B. Gidas and J. Spruck,
\newblock {Asymptotic symmetry and local behavior of semilinear elliptic equations with critical Sobolev growth},
\newblock \emph{Comm. Pure Appl. Math.}, \textbf{42} (1989), 271--297.

\bibitem{CKN84} 
\newblock L. Caffarelli, R. Kohn and L. Nirenberg,
\newblock First order interpolation inequalities with weights,
\newblock \emph{Compos. Math.}, \textbf{53} (1984), 259--275.

\bibitem{Caristi2008} 
\newblock G. Caristi, L. D{'}Ambrosio and E. Mitidieri,
\newblock {Representation formulae for solutions to some classes of higher order systems and related Liouville theorems},
\newblock \emph{Milan J. Math.}, \textbf{76} (2008), 27--67.

\bibitem{CMS98} 
\newblock G. Caristi, E. Mitidieri and R. Soranzo,
\newblock Isolated singularities of polyharmonic equations,
\newblock \emph{Atti Sem. Mat. Fis. Univ. Modena}, \textbf{46} (1998), 257--294.

\bibitem{CW01} 
\newblock F. Catrina and Z. Wang,
\newblock {On the Caffarelli-Kohn-Nirenberg inequalities: sharp constants, existence (and nonexistence), and symmetry of extremal functions,}
\newblock \emph{Comm. Pure Appl. Math.}, \textbf{54} (2001), 229--258.

\bibitem{CFL13} 
\newblock W. Chen, Y. Fang and C. Li,
\newblock {Super poly-harmonic property of solutions for Navier boundary problems on a half space,}
\newblock \emph{J. Funct. Anal.}, \textbf{265} (2013), 1522--1555.

\bibitem{CL91} 
\newblock W. Chen and C. Li,
\newblock {Classification of solutions of some nonlinear elliptic equations,}
\newblock \emph{Duke Math. J.}, \textbf{63} (1991), 615--622.

\bibitem{CL13} 
\newblock W. Chen and C. Li,
\newblock {Super polyharmonic property of solutions for PDE systems and its applications,}
\newblock \emph{Commun. Pure Appl. Anal.}, \textbf{12} (2013), 2497--2514.

\bibitem{CLO05} 
\newblock W. Chen, C. Li and B. Ou,
\newblock {Classification of solutions for a system of integral equations,}
\newblock \emph{Comm. in Partial Differential Equations}, \textbf{30} (2005), 59--65.

\bibitem{CLO05a} 
\newblock W. Chen, C. Li and B. Ou,
\newblock Qualitative properties of solutions for an integral equation,
\newblock \emph{Discrete Contin. Dyn. Syst.}, \textbf{12} (2005), 347--354.

\bibitem{CLO06} 
\newblock W. Chen, C. Li and B. Ou,
\newblock {Classification of solutions for an integral equation,}
\newblock \emph{Comm. Pure Appl. Math.}, \textbf{59} (2006), 330--343.

\bibitem{DAmbrosioMitidieri14} 
\newblock L. D{'}Ambrosio and E. Mitidieri,
\newblock {Hardy-Littlewood-{S}obolev systems and related Liouville theorems,}
\newblock \emph{Discrete Contin. Dyn. Syst. Ser. S}, \textbf{7} (2014), 653--671.

\bibitem{DeFF94}
\newblock D. G. De Figueiredo and P. L. Felmer,
\newblock A Liouville-type theorem for elliptic systems,
\newblock \emph{Ann. Sc. Norm. Sup. Pisa}, \textbf{21} (1994), 387--397.

\bibitem{GNN81} 
\newblock B. Gidas, W. Ni and L. Nirenberg,
\newblock Symmetry of positive solutions of nonlinear elliptic equations in $R^n$,
\newblock \emph{Adv. Math. Suppl. Studies A}, \textbf{7} (1981), 369--402.

\bibitem{GS81apriori} 
\newblock B. Gidas and J. Spruck,
\newblock {A priori bounds for positive solutions of nonlinear elliptic equations,}
\newblock \emph{Comm. Partial Differential Equations}, \textbf{6} (1981), 883--901.

\bibitem{GS81} 
\newblock B. Gidas and J. Spruck,
\newblock {Global and local behavior of positive solutions of nonlinear elliptic equations,}
\newblock \emph{Comm. Pure and Appl. Math.}, \textbf{34} (1981), 525--598.

\bibitem{HL} 
\newblock G. H. Hardy, J. E. Littlewood and G. P\'{o}lya,
\newblock {Some properties of fractional integral (1),}
\newblock \emph{Math. Z.}, \textbf{27} (1928), 565--606.

\bibitem{Henon73}
\newblock M. H\'{e}non,
\newblock Numerical experiments on the stability of spherical stellar systems,
\newblock \emph{Astronom. Astrophys.}, \textbf{24} (1973), 229--238.

\bibitem{Lei13}
\newblock Y. Lei,
\newblock {Asymptotic properties of positive solutions of the Hardy-Sobolev type equations,}
\newblock \emph{J. Differential Equations}, \textbf{254} (2013), 1774--1799.

\bibitem{LL13}
\newblock Y. Lei and C. Li,
\newblock Sharp criteria of Liouville type for some nonlinear systems,
\newblock preprint, arXiv{1301.6235}.

\bibitem{Li96}
\newblock C. Li,
\newblock {Local asymptotic symmetry of singular solutions to nonlinear elliptic equations,}
\newblock \emph{Ivent. Math.}, \textbf{123} (1996), 221--231.

\bibitem{Li13}
\newblock C. Li,
\newblock A degree theory approach for the shooting method,
\newblock preprint, arXiv{1301.6232}.

\bibitem{Lieb83} 
\newblock E. Lieb,
\newblock {Sharp constants in the {Hardy--Littlewood--Sobolev} and related inequalities,}
\newblock \emph{Ann. of Math.}, \textbf{118} (1983), 349--374.

\bibitem{LGZ06a} 
\newblock J. Liu, Y. Guo and Y. Zhang,
\newblock Existence of positive entire solutions for polyharmonic equations and systems,
\newblock \emph{J. Partial Differential Equations}, \textbf{19} (2006), 256--270.

\bibitem{LGZ06} 
\newblock J. Liu, Y. Guo and Y. Zhang,
\newblock {Liouville-type theorems for polyharmonic systems in $R^N$,}
\newblock \emph{J. Differential Equations}, \textbf{225} (2006), 685--709.

\bibitem{Mitidieri93} 
\newblock E. Mitidieri,
\newblock {A Rellich type identity and applications,}
\newblock \emph{Comm. Partial Differential Equations}, \textbf{18} (1993), 125--151.

\bibitem{Mitidieri96} 
\newblock E. Mitidieri,
\newblock Nonexistence of positive solutions of semilinear elliptic systems in $R^N$,
\newblock \emph{Differ. Integral Equations}, \textbf{9} (1996), 465--480.

\bibitem{Phan12} 
\newblock Q. H. Phan,
\newblock Liouville-type theorems and bounds of solutions for Hardy-H\'{e}non systems,
\newblock \emph{Adv. Differential Equations}, \textbf{17} (2012), 605--634.

\bibitem{PhanSouplet12}
\newblock Q. H. Phan and P. Souplet,
\newblock {Liouville-type theorems and bounds of solutions of Hardy-H\'{e}non equations,}
\newblock \emph{J. Differential Equations}, \textbf{252} (2012), 2544--2562.

\bibitem{PQS07}
\newblock P. Pol\'a\v{c}ik, P. Quittner and P. Souplet,
\newblock {Singularity and decay estimates in superlinear problems via Liouville-type theorems, I: Elliptic equations and systems,}
\newblock \emph{Duke Math. J.}, \textbf{139} (2007), 555--579.

\bibitem{SZ96} 
\newblock J. Serrin and H. Zou,
\newblock Non-existence of positive solutions of Lane-Emden systems,
\newblock \emph{Differ. Integral Equations}, \textbf{9} (1996), 635--653.

\bibitem{Souplet09} 
\newblock P. Souplet,
\newblock {The proof of the Lane--Emden conjecture in four space dimensions,}
\newblock \emph{Adv. Math.}, \textbf{221} (2009), 1409--1427.

\bibitem{SW58} 
\newblock E. B. Stein and G. Weiss,
\newblock Fractional integrals on n-dimensional Euclidean space,
\newblock \emph{J. Math. Mech.}, \textbf{7} (1958), 503--514.

\bibitem{Villavert:14b} 
\newblock J. Villavert,
\newblock {Shooting with degree theory: Analysis of some weighted poly-harmonic systems,}
\newblock \emph{J. Differential Equations}, \textbf{257} (2014), 1148--1167.

\bibitem{WeiXu99} 
\newblock J. Wei and X. Xu,
\newblock {Classification of solutions of higher order conformally invariant equations,}
\newblock \emph{Math. Ann.}, \textbf{313} (1999), 207--228.
\end{thebibliography}
%\nocite{}
\footnotesize

\end{document}